\newtheorem{thm}{Theorem}[section]
\theoremstyle{definition}
\newtheorem{cor}[thm]{Corollary}
\newtheorem{prop}[thm]{Proposition}
\newtheorem{defn}[thm]{Definition}
\newtheorem{lem}[thm]{Lemma}
\newtheorem{rem}[thm]{Remark}
\newtheorem{ex}[thm]{Example}
\numberwithin{equation}{section}
\begin{document}
\title{Some generalizations of second submodules}

\author{H. Ansari-Toroghy}
\address {Department of pure Mathematics, Faculty of mathematical
Sciences, University of Guilan,
P. O. Box 41335-19141, Rasht, Iran.}%
\email{ansari@guilan.ac.ir}%

\author{F. Farshadifar}
\footnote {This research was in part supported by a grant from IPM (No. 94130048)}
\address{University of Farhangian, P. O. Box 19396-14464, Tehran, Iran.}
\email{f.farshadifar@gmail.com}

\address{School of Mathematics, Institute for Research in Fundamental Sciences (IPM), P.O. Box: 19395-5746, Tehran, Iran.}

\subjclass[2000]{13C13, 13C99}%
\keywords {
2-absorbing second, completely irreducible, and  strongly 2-absorbing second.}

\begin{abstract}
In this paper, we will introduce two generalizations of second submodules of a module over a commutative ring and explore some basic properties of these classes of modules
\end{abstract}

\maketitle
\section{Introduction}
\noindent
Throughout this paper, $R$ will denote a commutative ring with
identity and "$\subset$" will denote the strict inclusion.  Further, $\Bbb Z$ will denote the ring of integers.

Let $M$ be an $R$-module. A proper submodule $P$ of $M$ is said
 to be \emph{prime} if for any $r \in R$ and $m \in M$ with
$rm \in P$, we have $m \in P$ or $r \in (P:_RM)$ \cite{Da78}.
A non-zero submodule $S$ of $M$ is said to be \emph{second}
if for each $a \in R$, the homomorphism $ S \stackrel {a} \rightarrow S$
is either surjective or zero \cite{Y01}.
In this case $Ann_R(S)$ is a prime ideal of $R$.

Badawi gave a generalization of prime ideals in \cite{Ba07} and said such ideals 2-
absorbing ideals. A proper ideal $I$ of $R$ is a \emph{2-absorbing ideal}
of $R$ if whenever $a, b, c \in R$ and $abc \in I$, then $ab \in I$ or
 $ac \in I$ or $bc \in I$. He proved that $I$ is a 2-absorbing ideal of
 $R$ if and only if whenever $I_1$, $I_2$, and $I_3$ are ideals of $R$
with $I_1I_2I_3 \subseteq I$,
then $I_1I_2 \subseteq I$ or $I_1I_3 \subseteq I$ or $I_2I_3 \subseteq I$.
Yousefian Darani and Soheilnia in \cite{YS11} extended 2-absorbing ideals
 to 2-absorbing submodules.
A proper submodule $N$ of $M$ is called a \emph{2-absorbing submodule }of
 $M$ if whenever $abm \in N$
for some $a, b \in R$ and $m \in M$, then $am \in N$ or $bm \in N$ or
$ab \in (N :_R M)$. Several authors investigated properties of 2-absorbing submodules, for example see \cite{YS11, pb12, pb15}.

A submodule $N$ of an $R$-module $M$ is called \emph{strongly 2-absorbing} if $IJL \subseteq N$ for some ideals $I, J$ of $R$ and a submodule $L$ of $M$, then
$IL \subseteq N$ or $JL \subseteq N$ or $IJ \in (N :_R M)$ \cite{DS12}.

The purpose of this paper is to introduce the dual notions of 2-absorbing and strongly
2-absorbing submodules and obtain some related results. Also, as we can see in Corollary \ref{c2.18}, these are two generalizations of second submodules. In \cite[2.3]{pb15}, the authors show that $N$ is a 2-absorbing submodule
of an $R$-module $M$ if and only if $N$ is a strongly 2-absorbing submodule of $M$. The Example \ref{e2.2} shows that the dual of this fact is not true in general.

\section{2-absorbing second submodules}
\noindent
Let $M$ be an $R$-module. A proper submodule $N$ of
$M$ is said to be \emph{completely irreducible} if $N=\bigcap _
{i \in I}N_i$, where $ \{ N_i \}_{i \in I}$ is a family of
submodules of $M$, implies that $N=N_i$ for some $i \in I$. It is
easy to see that every submodule of $M$ is an intersection of
completely irreducible submodules of $M$ \cite{FHo06}.

We frequently use the following basic fact without further comment.
\begin{rem}\label{r2.1}
Let $N$ and $K$ be two submodules of an $R$-module $M$. To prove $N\subseteq K$, it is enough to show that if $L$ is a completely irreducible submodule of $M$ such that $K\subseteq L$, then $N\subseteq L$.
\end{rem}

\begin{defn}\label{d2.1}
Let $N$ be a non-zero submodule of an $R$-module $M$. We say that
$N$ is a \emph{2-absorbing second submodule of} $M$ if whenever
 $a, b \in R$, $L$ is a completely irreducible submodule of $M$,
and $abN\subseteq L$, then $aN\subseteq L$ or
$bN\subseteq L$ or $ab \in Ann_R(N)$. This can be regarded as a dual notion of the
2-absorbing submodule.
\end{defn}

A non-zero $R$-module $M$ is said to be \emph{secondary} if for each $a \in R$ the endomorphism of $M$ given by multiplication by $a$ is either surjective or nilpotent \cite{Ma73}.

\begin{thm}\label{t2.1} Let $M$ be an $R$-module. Then we have the following.
\begin{itemize}
  \item [(a)] If either $N$ is a second submodule of $M$ or N is a sum of two second
submodules of $M$, then $N$ is 2-absorbing second.
  \item [(b)] If $N$ is a secondary submodule of $M$ and $R/Ann_R(N)$ has
no non-zero nilpotent element, then $N$ is 2-absorbing second.
\end{itemize}
\end{thm}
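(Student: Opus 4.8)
The plan is to obtain part (b) as a consequence of part (a): I would first show that the hypotheses in (b) force $N$ to be a second submodule, and then invoke (a). For part (a) itself I would argue in two stages, first treating a single second submodule and then a sum of two.

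For the single second submodule: suppose $N$ is second, take $a,b\in R$ and a completely irreducible submodule $L$ of $M$ with $abN\subseteq L$. Since $N$ is second, multiplication by $a$ on $N$ is surjective or zero. If it is zero, then $aN=0\subseteq L$. If it is surjective, then $aN=N$, and by commutativity $bN=b(aN)=(ab)N\subseteq L$. Thus a second submodule always satisfies the (stronger) dichotomy ``$aN\subseteq L$ or $bN\subseteq L$'', and in particular is $2$-absorbing second.

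Now suppose $N=N_1+N_2$ with $N_1,N_2$ second, and $abN\subseteq L$ with $L$ completely irreducible. Then $abN_i\subseteq L$ for $i=1,2$, so by the previous paragraph $aN_i\subseteq L$ or $bN_i\subseteq L$ for each $i$. If $aN_1\subseteq L$ and $aN_2\subseteq L$, then $aN=aN_1+aN_2\subseteq L$; symmetrically for $b$. The remaining case, up to swapping $N_1$ and $N_2$, is $aN_1\subseteq L$, $bN_2\subseteq L$, $aN_2\not\subseteq L$, $bN_1\not\subseteq L$, and this is where the work is. Here $aN_2\neq 0$ (else $aN_2\subseteq L$), so secondness gives $aN_2=N_2$; likewise $bN_1=N_1$. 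From $bN_2\subseteq L$ and $N_2$ second, either $bN_2=0$ or $bN_2=N_2$; but $bN_2=N_2$ would give $N_2\subseteq L$ and hence $aN_2\subseteq aL\subseteq L$, a contradiction, so $bN_2=0$. Symmetrically $aN_1=0$. Therefore $abN_1=b(aN_1)=0$ and $abN_2=a(bN_2)=0$, so $abN=0$, i.e.\ $ab\in Ann_R(N)$.

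Finally, for part (b): given $a\in R$, since $N$ is secondary, multiplication by $a$ on $N$ is surjective or nilpotent; in the nilpotent case $a^n\in Ann_R(N)$ for some $n\ge 1$, so the class of $a$ in $R/Ann_R(N)$ is nilpotent, hence zero by hypothesis, i.e.\ $aN=0$. Thus $aN=N$ or $aN=0$ for every $a$, and since $N\neq 0$ it is a second submodule; part (a) then finishes the proof. The only genuinely delicate point is the ``mixed'' configuration in the sum-of-two case: the reason it collapses to $ab\in Ann_R(N)$ is that a completely irreducible $L$ is in particular a submodule, so it absorbs scalar multiplication, and combining $N_i\subseteq L$ with the surjective-or-zero property pins down $aN_1=0$ and $bN_2=0$.
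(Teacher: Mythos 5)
Your proof is correct, and it differs from the paper's in two worthwhile ways. For the sum $N=N_1+N_2$, the paper first invokes an external fact (\cite[2.10]{AF12}) to get the dichotomy ``$abN_i=0$ or $N_i\subseteq L$'' for each second $N_i$, and then resolves the mixed case ($abN_1=0$, $N_2\subseteq L$) by using that $Ann_R(N_1)$ is a prime ideal to split $ab\in Ann_R(N_1)$ into $aN_1=0$ or $bN_1=0$, concluding $aN\subseteq L$ or $bN\subseteq L$. You instead work directly from the surjective-or-zero definition, show that a single second submodule already satisfies the stronger conclusion ``$aN\subseteq L$ or $bN\subseteq L$'', and in the mixed case pin down $aN_1=0$ and $bN_2=0$ to land in the third alternative $ab\in Ann_R(N)$; this is self-contained and avoids both the citation and the primeness of the annihilator. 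For part (b), the paper argues directly: if $aN\not\subseteq L$ and $bN\not\subseteq L$ then $a,b\in\sqrt{Ann_R(N)}$, so $(ab)^s\in Ann_R(N)$ and reducedness of $R/Ann_R(N)$ forces $ab\in Ann_R(N)$. You observe instead that the hypotheses of (b) already force $N$ to be second (a nilpotent multiplication gives $a^n\in Ann_R(N)$, hence $a\in Ann_R(N)$ by reducedness), and then quote (a); this is a cleaner reduction and actually proves the slightly stronger statement that such an $N$ is second, not merely 2-absorbing second. Both arguments are complete and your case analysis in the sum-of-two situation is exhaustive.
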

\begin{proof}
(a) The first assertion is clear. To see the second assertion, let $N_1$ and $N_2$ be two second submodules of
$M$. We show that $N_1+ N_2$ is a 2-absorbing second submodule of $M$. Assume that
$a, b \in R$, $L$ is a completely irreducible submodule of $M$,
and $ab(N_1+N_2)\subseteq L$. Since $N_1$ is second, $abN_1=0$
or $N_1\subseteq L$ by \cite[2.10]{AF12}. Similarly, $abN_2=0$
or $N_2\subseteq L$. If $abN_1=0=abN_2$ (resp. $N_1\subseteq L$ and $N_2\subseteq L$), then we are done. Now let $abN_1=0$ and $N_2\subseteq L$.
Then $aN_1=0$ or $bN_1=0$ because $Ann_R(N_1)$ is a prime
ideal of $R$. If $aN_1=0$, then $a(N_1+N_2) \subseteq aN_1+N_2 \subseteq N_2 \subseteq L$. Similarly, if $bN_1=0$, we get  $b(N_1+N_2)\subseteq L$ as desired.

(b) Let $a, b \in R$, $L$ be a completely irreducible
submodule of $M$, and $abN\subseteq L$. Then
if $aN\subseteq L$ or $bN\subseteq L$, we are done.
Let $aN\not \subseteq L$ and $bN\not \subseteq L$.
Then $a, b \in \sqrt{Ann_R(N)}$. Thus, $(ab)^s \in Ann_R(N)$ for some positive integer $s$.
Therefore, $ab \in Ann_R(N)$ because $R/Ann_R(N)$ has no non-zero nilpotent element.
\end{proof}

\begin{lem}\label{l1.3}
Let $I$ be an ideal of $R$ and $N$ be a 2-absorbing second submodule of $M$.
If $a\in R$, $L$ is a completely irreducible submodule of $M$,
and $IaN \subseteq L$, then $aN \subseteq L$ or $IN \subseteq L$ or $Ia \in Ann_R(N)$.
\end{lem}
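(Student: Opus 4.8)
The plan is to reduce the ideal statement to the two-element definition by an element-chasing argument. So suppose $I a N \subseteq L$ for a completely irreducible submodule $L$, with the goal of showing $aN\subseteq L$ or $IN\subseteq L$ or $Ia\in Ann_R(N)$. First I would assume that the first and third alternatives fail, i.e. $aN\not\subseteq L$ and $Ia\not\subseteq Ann_R(N)$, and deduce $IN\subseteq L$. Pick $b\in I$. Then $baN\subseteq IaN\subseteq L$, so by Definition~\ref{d2.1} applied to the pair $a,b$ we get $aN\subseteq L$ or $bN\subseteq L$ or $ab\in Ann_R(N)$. Since $aN\not\subseteq L$ by assumption, every $b\in I$ satisfies $bN\subseteq L$ or $abN=0$.

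The main obstacle is that these two possibilities can a priori be split across different elements of $I$: some $b\in I$ might kill $aN$ without sending $bN$ into $L$, and others might do the reverse, so one cannot immediately conclude $IN\subseteq L$. To handle this I would argue as follows. Let $I_1=\{b\in I : bN\subseteq L\}$ and $I_2=\{b\in I : abN=0\}$; both are easily seen to be ideals (submodule preimages), and $I=I_1\cup I_2$. A standard prime-avoidance-type lemma for ideals then forces $I=I_1$ or $I=I_2$. If $I=I_1$ we are done, since then $IN\subseteq L$. If instead $I=I_2$, then $aIN=0\subseteq L$, so in particular $aN\subseteq L$ would be the relevant deduction only if $IN\ne 0$; more carefully, from $I=I_2$ we get $Ia\subseteq Ann_R(N)$, contradicting our standing assumption. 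Hence the case $I=I_2$ cannot occur, and $IN\subseteq L$ follows.

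An alternative, cleaner route that avoids the union-of-two-ideals step is to mimic Badawi's argument directly: pick $b_1,b_2\in I$ with $b_1N\not\subseteq L$ (if no such $b_1$ exists then $IN\subseteq L$ and we are done) and $ab_2N\ne 0$ (if no such $b_2$ exists then $IaN$... i.e. $aIN=0$ so $Ia\in Ann_R(N)$, contrary to assumption), and then examine the element $b_1+b_2\in I$ together with $a$, using $a(b_1+b_2)N\subseteq L$; comparing the three outcomes of Definition~\ref{d2.1} for the pairs $(a,b_1)$, $(a,b_2)$, $(a,b_1+b_2)$ yields a contradiction in every case. I expect the delicate point either way to be bookkeeping the six or so sub-cases and invoking the primeness-type splitting correctly; the algebra itself is routine once the case division is set up, and Remark~\ref{r2.1} is not needed here since $L$ is given to be completely irreducible from the outset.
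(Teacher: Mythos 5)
Your first argument is correct, and it is a cleaner packaging of the same underlying trick the paper uses, organized around a different key lemma. After fixing $aN\not\subseteq L$ and $Ia\not\subseteq Ann_R(N)$, you observe that every $b\in I$ lies in $I_1=I\cap(L:_RN)$ or in $I_2=I\cap Ann_R(aN)$ (both genuinely ideals, as you say), and then invoke the standard fact that an abelian group is never the union of two proper subgroups to get $I=I_1$ or $I=I_2$; the latter contradicts $Ia\not\subseteq Ann_R(N)$, so $IN\subseteq L$. The paper instead runs the underlying $b+c$ computation inline: it picks one $b\in I$ with $abN\neq 0$, deduces $bN\subseteq L$, and for arbitrary $c\in I$ examines $(b+c)aN\subseteq L$; in the subcase $(b+c)a\in Ann_R(N)$ it needs a \emph{second} application of Definition~\ref{d2.1} to the pair $c,a$ (noting $ca\notin Ann_R(N)$) to conclude $cN\subseteq L$. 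Your covering-lemma route avoids that second application entirely and makes the case analysis essentially automatic, at the cost of stating the two-ideal covering fact (which, despite your label, needs no primeness --- it is just the subgroup-union argument, i.e.\ exactly the $b_1+b_2$ manipulation your second sketch would unwind by hand). One cosmetic point: the sentence beginning ``so in particular $aN\subseteq L$ would be the relevant deduction\dots'' is a false start; only your ``more carefully'' clause ($I=I_2$ gives $Ia\subseteq Ann_R(N)$, contradiction) is needed, and it is right. Note also that the statement's ``$Ia\in Ann_R(N)$'' must be read as $Ia\subseteq Ann_R(N)$, as both you and the paper do.
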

\begin{proof}
Let $aN \not \subseteq L$ and $Ia \not \in Ann_R(N)$.
Then there exists $b \in I$ such that $abN \not = 0$. Now as $N$
is a 2-absorbing second submodule of $M$, $baN \subseteq L$ implies that $bN \subseteq L$. We show that $IN \subseteq L$. To see this, let $c$ be an arbitrary element of $I$.
Then $(b + c)aN \subseteq L$. Hence, either $(b + c)N \subseteq L$ or $(b + c)a \in Ann_R(N)$. If $(b+c)N \subseteq L$, then since $bN \subseteq L$ we have $cN \subseteq L$. If $(b+c)a \in Ann_R(N)$, then $ca \not \in Ann_R(N)$, but $caN \subseteq L$. Thus $cN \subseteq L$. Hence, we conclude that
$IN \subseteq L$.
\end{proof}

\begin{lem}\label{l1.4}
Let $I$ and $J$ be two ideals of $R$ and $N$ be a 2-absorbing second submodule of $M$.
If $L$ is a completely irreducible submodule of $M$ and $IJN \subseteq L$,
then $IN \subseteq L$ or $JN \subseteq L$ or $IJ \subseteq Ann_R(N)$.
\end{lem}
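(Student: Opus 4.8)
The plan is to bootstrap from Lemma \ref{l1.3} in exactly the way Lemma \ref{l1.3} was bootstrapped from the definition, combined with the elementary fact that an ideal which is the union of two ideals coincides with one of them. So I would argue by contradiction: suppose $IN \not\subseteq L$ and $JN \not\subseteq L$, and aim to show $IJ \subseteq Ann_R(N)$.

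First I would fix an arbitrary $a \in I$ and observe that $J(aN) = a(JN) \subseteq IJN \subseteq L$. Applying Lemma \ref{l1.3} with the ideal taken to be $J$ and the element taken to be $a$, we get $aN \subseteq L$, or $JN \subseteq L$, or $Ja \subseteq Ann_R(N)$. Since $JN \not\subseteq L$ by assumption, the middle alternative is ruled out, so every $a \in I$ satisfies $aN \subseteq L$ or $JaN = 0$.

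Next I would set $A := \{a \in I : aN \subseteq L\} = I \cap (L :_R N)$ and $B := \{a \in I : JaN = 0\} = I \cap Ann_R(JN)$. Both $A$ and $B$ are ideals of $R$, and the previous step shows $I = A \cup B$. Since an ideal equal to the union of two ideals must coincide with one of them (if $a \in A \setminus B$ and $a' \in B \setminus A$, then $a+a'$ lies in neither), we conclude $I = A$ or $I = B$. The case $I = A$ yields $IN \subseteq L$, contradicting our hypothesis; hence $I = B$, i.e. $IJN = 0$, which says $IJ \subseteq Ann_R(N)$, as desired.

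The whole thing is routine once Lemma \ref{l1.3} is available; the only points that call for a little care are verifying that $A$ and $B$ are genuinely ideals (so that the union-of-two-ideals fact applies) and keeping straight that in the invocation of Lemma \ref{l1.3} the ideal plays the role of $J$ and the scalar plays the role of $a$. I do not anticipate a real obstacle.
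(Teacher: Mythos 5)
Your proof is correct, and it reaches the conclusion by a somewhat cleaner route than the paper's. Both arguments bootstrap from Lemma \ref{l1.3}: like you, the paper first applies that lemma to conclude that every $a\in I$ with $aN\not\subseteq L$ satisfies $aJ\subseteq Ann_R(N)$ (and it also records the symmetric statement for $J$). But from there the paper works element by element: it fixes arbitrary $c\in I$ and $d\in J$, chooses witnesses $a\in I\setminus (L:_RN)$ and $b\in J\setminus (L:_RN)$, and runs a three-way case analysis on $(a+c)(b+d)N\subseteq L$ to force $cd\in Ann_R(N)$ in every case. You instead package the dichotomy into the identity $I=\bigl(I\cap (L:_RN)\bigr)\cup\bigl(I\cap Ann_R(JN)\bigr)$ and invoke the fact that an ideal is never the union of two proper subideals; this collapses the case analysis to one line and never needs the symmetric statement for $J$ at all. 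The underlying mechanism is the same --- the ``$a+a'$ lies in neither'' step in your union lemma is exactly the $(a+c)(b+d)$ trick in the paper --- but your formulation isolates it as a reusable structural fact and gives the slightly sharper intermediate conclusion that, once $JN\not\subseteq L$ is assumed, either $IN\subseteq L$ or already $IJ\subseteq Ann_R(N)$. The only hypotheses you must (and do) verify are that $(L:_RN)$ and $Ann_R(JN)$ are genuinely ideals contained in the ambient ring, so the union argument applies.
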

\begin{proof}
Let $IN \not \subseteq L$ and $JN \not \subseteq L$. We show that $IJ \subseteq Ann_R(N)$. Assume that $c \in I$ and $d\in J$. By assumption there exists $a\in I$ such that
$aN \not \subseteq L$ but $aJN \subseteq L$. Now Lemma \ref{l1.4}
shows that $aJ \subseteq Ann_R(N)$ and so $(I\setminus (L:_RN))J\subseteq Ann_R(N)$.
Similarly there exists $b \in (J\setminus (L:_RN))$ such that $Ib \subseteq Ann_R(N)$ and also $I(J\setminus (L:_RN))\subseteq Ann_R(N)$. Thus we have $ab \in Ann_R(N)$,
$ad \in Ann_R(N)$ and $cb \in Ann_R(N)$. As $a + c \in I$
and $b + d \in J$, we have $(a + c)(b + d)N \subseteq L$.
Therefore, $(a + c)N \subseteq L$ or $(b + d)N \subseteq L$ or $(a + c)(b + d) \in Ann_R(N)$. If $(a + c)N \subseteq L$, then $cN \not \subseteq L$. Hence $c \in  I\setminus (L:_RN)$
which implies that $cd \in Ann_R(N)$. Similarly if $(b + d)N \subseteq L$, we can deduce
that $cd \in Ann_R(N)$. Finally if $(a+c)(b+d) \in Ann_R(N)$, then $ab+ad+cb+cd \in Ann_R(N)$
so that $cd \in Ann_R(N)$. Therefore, $IJ \subseteq Ann_R(N)$.
\end{proof}

\begin{cor}\label{c1.1}
Let $M$ be an $R$-module and $N$ be a 2-absorbing second submodule of
$M$. Then $IN$ is a 2-absorbing second submodules of $M$ for all ideals $I$ of $R$ with $I \not \subseteq Ann_R(N)$.
\end{cor}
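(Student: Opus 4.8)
\emph{Proof proposal.} The plan is to check the two clauses of Definition \ref{d2.1} for $IN$ in turn: that $IN$ is non-zero, and that it satisfies the 2-absorbing second condition against every completely irreducible submodule. The first clause is immediate: since $I\not\subseteq Ann_R(N)$ there is $a\in I$ with $aN\neq 0$, and $aN\subseteq IN$, so $IN\neq 0$.

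For the second clause, fix $a,b\in R$ and a completely irreducible submodule $L$ of $M$ with $ab(IN)\subseteq L$. The key move is to rewrite this containment as $I(ab)N\subseteq L$ and to apply Lemma \ref{l1.3} with the ideal $I$ and the single ring element $ab$. This gives three cases: $abN\subseteq L$, or $IN\subseteq L$, or $Iab\subseteq Ann_R(N)$. In the case $IN\subseteq L$ we get $a(IN)\subseteq IN\subseteq L$ at once, and in the case $Iab\subseteq Ann_R(N)$ we get $ab(IN)=I(ab)N=0$, i.e.\ $ab\in Ann_R(IN)$. So the only substantive case is $abN\subseteq L$, and here I would use the hypothesis that $N$ itself is 2-absorbing second to split further into $aN\subseteq L$, or $bN\subseteq L$, or $ab\in Ann_R(N)$. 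From $aN\subseteq L$ one concludes $a(IN)=I(aN)\subseteq IL\subseteq L$ (the last containment only using that $L$ is a submodule); the case $bN\subseteq L$ is symmetric; and $ab\in Ann_R(N)$ again yields $ab(IN)=0$, hence $ab\in Ann_R(IN)$. In every branch one of the three required conclusions holds, so $IN$ is 2-absorbing second.

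I do not expect a serious obstacle here; the argument is essentially bookkeeping. The one point to handle carefully is the identification $a(IN)=I(aN)=aIN$ together with the trivial inclusion $IL\subseteq L$, since this is exactly what lets the conclusions obtained for $N$ (and $L$) be transported back to $IN$. As an alternative to the first step one could instead invoke Lemma \ref{l1.4} with the ideals $I$ and $(ab)$, which produces the same three cases.
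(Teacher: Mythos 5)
Your proof is correct, and it rests on the same key tool as the paper's, namely Lemma \ref{l1.3}, but you instantiate it differently. The paper reads $abIN\subseteq L$ as $(bI)\, a\, N\subseteq L$, taking $bI$ as the ideal and $a$ as the single ring element; the three alternatives of Lemma \ref{l1.3} then come out as $aN\subseteq L$, or $bIN\subseteq L$, or $abIN=0$, each of which is already one of the three conclusions required for $IN$ (the first after the trivial observation $aIN\subseteq aN\subseteq L$), so the paper finishes in one line. You instead take $I$ as the ideal and $ab$ as the element, which leaves the residual case $abN\subseteq L$ and forces a second round of case analysis via Definition \ref{d2.1} applied to $N$; all of those subcases close correctly, so the argument is sound, just slightly longer. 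Your version has one genuine advantage over the paper's: you check that $IN\neq 0$, which is required for $IN$ to qualify as a 2-absorbing second submodule at all and which the paper's proof passes over in silence.
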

\begin{proof}
Let $I$ be an ideal of $R$ with $I \not \subseteq Ann_R(N)$, $a, b \in R$, $L$ be a completely irreducible submodule of $M$,
and $abIN\subseteq L$. Then $aN \subseteq L$ or $bIN \subseteq L$ or $abIN=0$
by Lemma \ref{l1.3}. If $bIN \subseteq L$ or $abIN=0$, then we are done.
If $aN \subseteq L$, then $aIN \subseteq aN$ implies that $aIN \subseteq L$, as needed.
\end{proof}

An $R$-module $M$ is said to be a \emph{multiplication module} if for every submodule $N$ of $M$ there exists an ideal $I$ of $R$ such that $N=IM$ \cite{Ba81}.
\begin{cor}\label{c11.1}
Let $M$ be a multiplication 2-absorbing second $R$-module. Then every non-zero  submodule of $M$ is a 2-absorbing second submodule of $M$.
\end{cor}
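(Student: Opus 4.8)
The plan is to reduce the statement directly to Corollary \ref{c1.1}. The key observation is that saying $M$ is a \emph{2-absorbing second} module means precisely that $M$, viewed as a submodule of itself, is a 2-absorbing second submodule of $M$: one simply takes $N=M$ in Definition \ref{d2.1}. In particular $M\neq 0$, and $Ann_R(M)$ plays the role of the annihilator appearing in the definition.

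Next I would invoke the multiplication hypothesis. Let $N$ be an arbitrary non-zero submodule of $M$. By definition of a multiplication module there is an ideal $I$ of $R$ with $N=IM$. The essential bookkeeping step is to note that $N\neq 0$ forces $IM\neq 0$, i.e. $I\not\subseteq Ann_R(M)$; this is exactly the hypothesis needed to feed into Corollary \ref{c1.1}.

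Finally I would apply Corollary \ref{c1.1} with the 2-absorbing second submodule $M$ (of $M$) in the role of ``$N$'' and with the ideal $I$: the conclusion is that $IM$ is a 2-absorbing second submodule of $M$. Since $IM=N$, this is what we wanted. There is no real obstacle here beyond making sure the non-vanishing of $N$ is used to verify $I\not\subseteq Ann_R(M)$; everything else is a direct citation of the preceding corollary.
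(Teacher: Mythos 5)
Your proposal is correct and is exactly the argument the paper intends: write $N=IM$, observe $N\neq 0$ forces $I\not\subseteq Ann_R(M)$, and apply Corollary \ref{c1.1} with $M$ itself as the 2-absorbing second submodule. The paper's own proof is just the one-line citation of Corollary \ref{c1.1}; you have merely filled in the same details explicitly.
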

\begin{proof}
This follows from Corollary \ref{c1.1}.
\end{proof}

The following example shows that the condition ``$M$ is a multiplication module" in Corollary \ref{c11.1} can not be omitted.
\begin{ex}\label{e2.1}
For any prime integer $p$, let $M=\Bbb Z_{p^\infty}$ and
 $N=\langle 1/p^3+\Bbb Z\rangle$. Then clearly, $M$ is a 2-absorbing second $\Bbb Z$-module but $p^2\langle 1/p^3+\Bbb Z\rangle \subseteq \langle 1/p+\Bbb Z\rangle$
implies that $N$ is not a 2-absorbing second submodule of $M$.
\end{ex}

We recall that an $R$-module $M$ is said to be a \emph{cocyclic module} if
$Soc_R(M)$ is a large and simple submodule of $M$ \cite{Y98}. (Here $Soc_R(M)$
denotes the sum of all minimal submodules of $M$.). A submodule $L$ of $M$ is a completely irreducible submodule of $M$ if and only if $M/L$ is a cocyclic $R$-module \cite{FHo06}.

\begin{prop}\label{c11.6}
Let $N$ be a 2-absorbing second submodule of an $R$-module $M$.
Then we have the following.
\begin{itemize}
   \item [(a)] If $L$ is a completely irreducible submodule of $M$ such that $N \not \subseteq L$, then $(L:_RN)$ is a 2-absorbing ideal of $R$.
   \item [(b)] If $M$ is a cocyclic module, then $Ann_R(N)$ is a 2-absorbing ideal of $R$.
   \item [(c)] If $a \in R$, then  $a^nN=a^{n+1}N$, for all $n \geq 2$.
   \item [(d)] If  $Ann_R(N)$ is a prime ideal of $R$, then $(L:_RN)$ is a prime ideal of $R$ for all completely irreducible submodules $L$ of $M$ such that $N \not \subseteq L$.
  \end{itemize}
\end{prop}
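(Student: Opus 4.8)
The plan is to get parts (c) and (d) directly from Definition~\ref{d2.1}, to prove (a) by reducing to one delicate configuration, and then to read off (b) from (a). For (c) I would first isolate the following consequence of the definition: \emph{if $P$ is a $2$-absorbing second submodule of an $R$-module and $r\in R$ satisfies $r^{2}P\neq 0$, then $rP=r^{2}P$.} The inclusion $r^{2}P\subseteq rP$ is automatic; for the reverse, by Remark~\ref{r2.1} it suffices to show $rP\subseteq L$ for every completely irreducible $L$ with $r^{2}P\subseteq L$, and applying Definition~\ref{d2.1} to $P$ with $a=b=r$ yields $rP\subseteq L$ or $r^{2}\in Ann_{R}(P)$, the latter being impossible since $r^{2}P\neq 0$. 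Granting this, fix $a\in R$: if $a^{2}N=0$ then $a^{n}\in Ann_{R}(N)$ for all $n\ge 2$, so $a^{n}N=0=a^{n+1}N$; and if $a^{2}N\neq 0$ then $aN=a^{2}N$, and multiplying by $a^{\,n-1}$ gives $a^{n}N=a^{n+1}N$ for every $n\ge 1$. Either way $a^{n}N=a^{n+1}N$ for $n\ge 2$.

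For (d), note first that $(L:_{R}N)$ is proper because $N=1\cdot N\not\subseteq L$. If $ab\in (L:_{R}N)$, i.e.\ $abN\subseteq L$, then Definition~\ref{d2.1} gives $aN\subseteq L$, or $bN\subseteq L$, or $ab\in Ann_{R}(N)$; in the first two cases $a\in(L:_{R}N)$ or $b\in(L:_{R}N)$, and in the third the primeness of $Ann_{R}(N)$ together with $Ann_{R}(N)\subseteq (L:_{R}N)$ again forces $a\in(L:_{R}N)$ or $b\in(L:_{R}N)$. Hence $(L:_{R}N)$ is prime.

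For (a), $(L:_{R}N)$ is again proper since $N\not\subseteq L$. I would take $abc\in (L:_{R}N)$, i.e.\ $abcN\subseteq L$, and apply Lemma~\ref{l1.3} to the ideal $Rc$ and the element $ab$: this gives $abN\subseteq L$, or $cN\subseteq L$, or $abc\in Ann_{R}(N)$; the first gives $ab\in(L:_{R}N)$, and the second gives $acN\subseteq cN\subseteq L$, hence $ac\in(L:_{R}N)$. So one may assume $abcN=0$, and running the same step with the data $(Rb,ac)$ and $(Ra,bc)$ one may further assume that none of $aN,bN,cN,abN,acN,bcN$ is contained in $L$. It remains to reach a contradiction. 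Since $L$ is completely irreducible, $M/L$ is cocyclic, so $L$ has a smallest submodule $L^{*}$ with $L\subsetneq L^{*}$, with $L^{*}/L$ simple and $L^{*}$ contained in every submodule of $M$ strictly larger than $L$; set $\mathfrak{m}=Ann_{R}(L^{*}/L)$, a maximal ideal. From $bcN\not\subseteq L$ we get $L^{*}\subseteq L+bcN$, whence $aL^{*}\subseteq aL+abcN\subseteq L$, i.e.\ $a\in\mathfrak{m}$; symmetrically $b,c\in\mathfrak{m}$.

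The final step — forcing a contradiction out of ``$a,b,c\in\mathfrak{m}$, $abcN=0$, the six non-containments, and $N$ $2$-absorbing second'' by exploiting the simplicity and essentiality of $L^{*}/L$ — is the heart of the matter and the part I expect to be the main obstacle; here it is convenient to recast $2$-absorbing secondness, using Definition~\ref{d2.1} and the fact that every submodule is an intersection of completely irreducible submodules, as the identity $rsN=rN\cap sN$ valid whenever $rs\notin Ann_{R}(N)$, and then to produce an element of $L^{*}\cap abN$ outside $L$ and multiply through by the remaining factors. Finally, (b) is just the case $L=(0)$ of (a): $M$ is cocyclic if and only if $M/(0)$ is cocyclic, i.e.\ if and only if $(0)$ is a completely irreducible submodule of $M$, and then (a) says precisely that $Ann_{R}(N)=\big((0):_{R}N\big)$ is a $2$-absorbing ideal of $R$.
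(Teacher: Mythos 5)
Parts (b), (c) and (d) of your proposal are correct. Your (d) and (b) coincide with the paper's argument, and your (c) takes a slightly different but valid route: instead of showing $a^2N=a^3N$ directly, you prove the stronger intermediate claim that $aN=a^2N$ whenever $a^2N\neq 0$ by applying Definition~\ref{d2.1} with $a=b$ and invoking Remark~\ref{r2.1}; this avoids any auxiliary lemma and is, if anything, cleaner than the paper's proof, which routes through the completely irreducible submodule $(L:_Ma)$.

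Part (a), however, has a genuine gap, and you have correctly identified where it is. Your reduction via Lemma~\ref{l1.3} to the case ``$abcN=0$ and none of $aN,bN,cN,abN,acN,bcN$ is contained in $L$'' is sound, but the promised contradiction is never derived: the observation that $a,b,c\in\mathfrak{m}=Ann_R(L^{*}/L)$ is true but leads nowhere as written, and the suggested identity $rsN=rN\cap sN$ (for $rs\notin Ann_R(N)$), while correct, is not combined into an actual argument. The missing ingredient is the single fact the paper leans on (citing \cite[2.1]{AFS124}): if $L$ is completely irreducible and $cM\not\subseteq L$, then $(L:_Mc)$ is again a completely irreducible submodule of $M$ (because $M/(L:_Mc)$ embeds into the cocyclic module $M/L$ via $m+(L:_Mc)\mapsto cm+L$, and a non-zero submodule of a cocyclic module is cocyclic). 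Granting this, part (a) is immediate with no case split: $abcN\subseteq L$ gives $abN\subseteq (L:_Mc)$ (and if $cM\subseteq L$ then $cN\subseteq L$ and one is done), so Definition~\ref{d2.1} yields $aN\subseteq (L:_Mc)$ or $bN\subseteq (L:_Mc)$ or $abN=0$, i.e. $ac\in(L:_RN)$ or $bc\in(L:_RN)$ or $ab\in(L:_RN)$. In particular, in your leftover case $abcN=0$ one still has $abN\subseteq(L:_Mc)$, and the same application of the definition contradicts one of your six non-containments. Without this lemma (or an equivalent substitute) your proof of (a) — and hence of (b), which depends on it — is incomplete.
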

\begin{proof}
(a) Since $N \not \subseteq L$, we have $(L:_RN) \not =R$. Let $a, b, c \in R$ and $abc\in (L:_RN)$.
Then $abN \subseteq (L:_Mc)$. Thus $aN \subseteq (L:_Mc)$  or $bN \subseteq (L:_Mc)$ or $abN=0$ because by \cite[2.1]{AFS124}, $(L:_Mc)$ is a completely irreducible submodule of $M$.  Therefore, $ac\in (L:_RN)$ or $bc\in (L:_RN)$ or $ab\in (L:_RN)$.

(b) Since $M$ is cocyclic, the zero submodule of $M$ is a completely irreducible submodule of $M$. Thus the result follows from part (a).

(c) It is enough to show that $a^2N=a^3N$.
It is clear that $a^3N \subseteq a^2N$. Let $L$ be a completely irreducible submodule of $M$ such that $a^3N \subseteq L$. Then $a^2N \subseteq (L:_Ra)$.
Since $N$ is 2-absorbing second submodule and $(L:_Ra)$ is a completely irreducible submodule of $M$ by \cite[2.1]{AFS124}, $aN \subseteq (L:_Ra)$ or $a^2N=0$. Therefore, $a^2N \subseteq L$. This implies that $a^2N \subseteq a^3N$.

(d) Let $a, b \in R$, $L$ be a completely irreducible
submodule of $M$ such that $N \not \subseteq L$, and $ab \in (L:_RN)$.
Then $aN \subseteq L$ or $bN \subseteq L$ or $abN=0$. If $abN=0$, then by
 assumption, $aN=0$ or $bN=0$. Thus in any cases we get that, $aN \subseteq L$
or $bN \subseteq L$.
\end{proof}

\begin{thm}\label{t11.10}
Let $N$ be a 2-absorbing second submodule of an $R$-module $M$. Then we have the following.
\begin{itemize}
   \item [(a)] If $\sqrt{Ann_R(N)} =P$ for some prime ideal $P$ of $R$ and $L$ is a completely irreducible submodule of $M$ such that $N \not \subseteq L$, then $\sqrt{(L :_R N)}$ is a prime ideal of $R$ containing $P$.
   \item [(b)] If $\sqrt{Ann_R(N)} =P\cap Q$ for some prime ideals $P$ and $Q$ of $R$, $L$ is a completely irreducible submodule of $M$ such that $N \not \subseteq L$, and $P \subseteq \sqrt{(L :_R N)}$, then $\sqrt{(L :_R N)}$ is a prime ideal of $R$.
\end{itemize}
\end{thm}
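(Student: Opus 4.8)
The plan is to derive both parts from Proposition \ref{c11.6}(a), which tells us that $(L:_RN)$ is a $2$-absorbing ideal of $R$ whenever $N\not\subseteq L$, together with the known structure theory of radicals of $2$-absorbing ideals. Recall Badawi's result that if $Q$ is a $2$-absorbing ideal of $R$, then $\sqrt{Q}$ is either a prime ideal or an intersection $P_1\cap P_2$ of exactly two prime ideals, and in the latter case $P_1P_2\subseteq Q$; moreover $\sqrt{Q}$ is always a $2$-absorbing ideal and every element of $\sqrt{Q}$ squares into $Q$. The work is to show that in our situation the extra hypothesis on $\sqrt{Ann_R(N)}$ forces the prime case.

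For part (a), set $I=(L:_RN)$, a proper $2$-absorbing ideal by Proposition \ref{c11.6}(a). First I would observe $P=\sqrt{Ann_R(N)}\subseteq\sqrt{I}$ since $Ann_R(N)\subseteq I$; this gives the ``containing $P$'' clause for free once we know $\sqrt I$ is prime. To see primeness, suppose $xy\in\sqrt{I}$ with $x,y\in R$; then $(xy)^k\in I$ for some $k$, so by Lemma \ref{l1.3} or directly by the $2$-absorbing property applied to powers, $x^kN\subseteq L$ or $y^kN\subseteq L$ or $x^ky^k\in Ann_R(N)$. In the first two cases $x\in\sqrt I$ or $y\in\sqrt I$ and we are done. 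In the third case $xy\in\sqrt{Ann_R(N)}=P$, which is prime, so $x\in P\subseteq\sqrt I$ or $y\in P\subseteq\sqrt I$. Hence $\sqrt I$ is prime. I would double-check the application of the $2$-absorbing property to the product $x\cdot x\cdots$; cleanest is to invoke that $\sqrt I$ itself is a $2$-absorbing ideal (Badawi), reducing to: $xy\in\sqrt I$ $2$-absorbing implies $x\cdot y\cdot 1$ lands in $\sqrt I$, but this doesn't immediately split — so instead I would argue via the radical structure: $\sqrt I=P_1$ prime, or $\sqrt I=P_1\cap P_2$. In the second subcase pick $z\in\sqrt I$; then $z^2\in I$, so by Proposition \ref{c11.6}(a) applied with $a=b=z$, $c=1$... no; rather use that $z^2N\subseteq L$ forces (2-absorbing second) $zN\subseteq L$ or $z^2\in Ann_R(N)$, i.e. $z\in\sqrt I$ contributes $zN\subseteq L$ or $z\in P$. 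This shows $\sqrt I\subseteq (L:_RN)\cup P=I\cup P$, hence $\sqrt I= I\cup P$ forces (prime avoidance) $\sqrt I\subseteq P$ or $\sqrt I\subseteq I$; the latter gives $\sqrt I=I$ a radical $2$-absorbing ideal, and combined with $P\subseteq\sqrt I$ one rules out the two-prime case because $P$ is prime and irredundant. This last bookkeeping is the main obstacle and I will write it carefully.

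For part (b), again let $I=(L:_RN)$, a proper $2$-absorbing ideal, so $\sqrt I$ is prime or equals $P_1\cap P_2$ for primes $P_1,P_2$. Assume for contradiction $\sqrt I=P_1\cap P_2$ with $P_1,P_2$ incomparable. The hypothesis $P\subseteq\sqrt I$ gives $P\subseteq P_1$ and $P\subseteq P_2$. Now I would run the same argument as above: for any $z\in\sqrt I$ we get $z\in (L:_RN)$ or $z\in\sqrt{Ann_R(N)}=P\cap Q$; thus $\sqrt I\subseteq I\cup(P\cap Q)\subseteq I\cup P\cup Q$. By prime avoidance (three ideals, one not necessarily prime — but $I$ is contained in $\sqrt I$, so effectively $\sqrt I\subseteq P\cup Q$), we get $\sqrt I\subseteq P$ or $\sqrt I\subseteq Q$. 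Either way $\sqrt I$ is contained in a prime ideal that also contains $P$; combining $\sqrt I\subseteq P$ with $P\subseteq\sqrt I$ yields $\sqrt I=P$ prime, contradicting the two-incomparable-primes assumption; the case $\sqrt I\subseteq Q$ gives $P\subseteq\sqrt I\subseteq Q$ and then $\sqrt I= P_1\cap P_2\subseteq Q$ with $P\subseteq P_i$, and since $P$ is prime sitting inside the intersection one forces $\sqrt I$ prime as well. I expect the delicate point throughout to be the reduction "$z\in\sqrt I\Rightarrow zN\subseteq L$ or $z\in\sqrt{Ann_R(N)}$": this needs $z^n N\subseteq L$ for some $n$ and then an induction using that $N$ is $2$-absorbing second (via Proposition \ref{c11.6}(c), $z^nN=z^2N$ for $n\ge2$, so it suffices to treat $z^2N\subseteq L$, which is exactly one application of the definition). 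I will state that reduction as the first step of the proof and then feed it into the prime-avoidance arguments for (a) and (b).
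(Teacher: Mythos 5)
Your first argument for (a) --- take $xy\in\sqrt{(L:_RN)}$, write $(xy)^kN=x^ky^kN\subseteq L$, apply the definition of 2-absorbing second to the two ring elements $a=x^k$, $b=y^k$, and in the remaining case use $xy\in\sqrt{Ann_R(N)}=P$ with $P$ prime and $P\subseteq\sqrt{(L:_RN)}$ --- is complete, correct, and is exactly the proof in the paper. The worry that prompts you to abandon it is unfounded: the definition of a 2-absorbing second submodule applies to an arbitrary pair of ring elements, and $x^k$, $y^k$ are just two ring elements, so nothing more delicate than one application of the definition is involved. Moreover this same direct computation disposes of (b) verbatim: in the third case $ab\in\sqrt{Ann_R(N)}=P\cap Q\subseteq P$, so $a\in P$ or $b\in P$, and the hypothesis $P\subseteq\sqrt{(L:_RN)}$ finishes it; that is all the paper means by ``the proof is similar to that of part (a).''

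The fallback route via Badawi's structure theorem and prime avoidance --- which is the only route you actually offer for (b) --- has a genuine gap. From $\sqrt I\subseteq I\cup P$ (resp.\ $\sqrt I\subseteq I\cup P\cup Q$) prime avoidance only yields $\sqrt I\subseteq I$ or $\sqrt I\subseteq P$ (or $\sqrt I\subseteq Q$); you cannot ``effectively drop $I$'' from the union merely because $I\subseteq\sqrt I$. The case $\sqrt I=I$ is precisely the one you must still rule out, and your stated reasons for dismissing it (``$P$ is prime and irredundant,'' ``since $P$ is prime sitting inside the intersection one forces $\sqrt I$ prime'') are assertions, not arguments: a radical 2-absorbing ideal of the form $P_1\cap P_2$ can perfectly well contain a smaller prime $P$, so containment of $P$ alone does not collapse the two-prime case. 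To actually kill that case one picks $x\in P_1\setminus P_2$ and $y\in P_2\setminus P_1$, notes $xy\in\sqrt I$, and runs exactly the direct computation you began with --- so the detour buys nothing and, as written, does not close. Keep your first argument, apply it to both parts, and delete the rest.
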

\begin{proof}
(a) Assume that $a, b \in R$ and $ab \in \sqrt{(L :_R N)}$. Then there is a positive integer $t$ such that $a^tb^tN \subseteq L$. By hypotheses, $N$ is a 2-absorbing second submodule of $M$, thus $a^tN \subseteq L$ or $b^tN \subseteq L$ or $a^tb^t \in Annn_R(N)$. If either $a^tN \subseteq L$ or $b^tN \subseteq L$, we
are done. So assume that $a^tb^t \in Ann_R(N)$. Then $ab \in \sqrt{Ann_R(N)}=P$ and
so $a \in P$ or $b \in P$. It is clear that $P =\sqrt{Ann_R(N)} \subseteq \sqrt{(L :_R N)}$. Therefore, $a \in \sqrt{(L :_R N)}$ or $b \in \sqrt{(L :_R N)}$.

(b) The proof is similar to that of part (a).
\end{proof}

\begin{prop}\label{p7.21}
Let $M$ be an $R$-module and let $\{K_i\}_{i \in I}$ be a chain of
2-absorbing second submodules of $M$. Then $\cup_{i \in I}K_i$ is a 2-absorbing second submodule of $M$.
\end{prop}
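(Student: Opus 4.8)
The plan is to set $K:=\bigcup_{i\in I}K_i$ and first note that $K$ is indeed a submodule of $M$, since a union of a chain of submodules is a submodule, and $K\neq 0$ because each $K_i$ is non-zero. Then I would verify the defining property directly: take $a,b\in R$ and a completely irreducible submodule $L$ of $M$ with $abK\subseteq L$, assume $aK\not\subseteq L$ and $bK\not\subseteq L$, and aim to show $ab\in Ann_R(N)$, i.e. $abK=0$.

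The key step is to produce a \emph{single} index witnessing both failures of containment. From $aK\not\subseteq L$ there is an index $s$ with $aK_s\not\subseteq L$, and from $bK\not\subseteq L$ there is an index $t$ with $bK_t\not\subseteq L$; since $\{K_i\}_{i\in I}$ is a chain, one of $K_s,K_t$ contains the other, and for the larger one, say $K_{i_0}$, we have both $aK_{i_0}\not\subseteq L$ and $bK_{i_0}\not\subseteq L$ (a superset of $K_s$ cannot absorb $a$ into $L$, and likewise for $b$).

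Next I would run the 2-absorbing second hypothesis of each member of the chain above $K_{i_0}$. For any $i$ with $K_{i_0}\subseteq K_i$ we have $abK_i\subseteq abK\subseteq L$, while $aK_i\not\subseteq L$ and $bK_i\not\subseteq L$ because $K_{i_0}\subseteq K_i$; since $K_i$ is 2-absorbing second, the only surviving alternative is $abK_i=0$. Taking $i=i_0$ gives $abK_{i_0}=0$, and hence $abK_i\subseteq abK_{i_0}=0$ for every $i$ with $K_i\subseteq K_{i_0}$. By the chain property every $K_i$ is comparable with $K_{i_0}$, so $abK_i=0$ for all $i\in I$, whence $abK=\bigcup_{i\in I}abK_i=0$, that is, $ab\in Ann_R(N)$. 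This gives $aK\subseteq L$ or $bK\subseteq L$ or $ab\in Ann_R(N)$, so $K$ is 2-absorbing second.

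I expect the only delicate point to be the bookkeeping in the second paragraph — extracting one index $i_0$ from the chain that simultaneously witnesses $aK_{i_0}\not\subseteq L$ and $bK_{i_0}\not\subseteq L$; once that is in place, the 2-absorbing second property of the individual $K_i$ forces $abK_i=0$ uniformly and the conclusion is immediate.
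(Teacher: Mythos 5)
Your proof is correct and follows essentially the same route as the paper: pick witnesses for the two failed containments, use the chain condition to find a single index dominating both, and then force $abK_i=0$ for all members of the chain via the 2-absorbing second property. The only blemish is the typo $Ann_R(N)$ where you mean $Ann_R\bigl(\bigcup_{i\in I}K_i\bigr)$; otherwise the argument, including the bookkeeping you flag as delicate, is sound.
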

\begin{proof}
Let $a, b \in R$, $L$ be a completely irreducible submodule of $M$, and $ab(\cup_{i \in I}K_i) \subseteq L$. Assume that $a(\cup_{i \in I}K_i )\not \subseteq L$ and $b(\cup_{i \in I}K_i) \not \subseteq L$. Then there are $m,n \in I$, where $aK_n \not \subseteq L$ and $bK_m \not \subseteq L$. Hence, for every $K_n \subseteq K_s$ and $K_m \subseteq K_d$ we have $aK_s \not \subseteq L$ and $bK_d \not \subseteq L$.  Therefore, for each submodule $K_h$ such that $K_n \subseteq K_h$ and $K_m \subseteq K_h$ we have $abK_h=0$. Hence $ab(\cup_{i \in I}K_i)=0$, as needed.
\end{proof}

\begin{defn}\label{d7.22}
We say that a 2-absorbing second submodule $N$ of an $R$-module $M$
is a \emph {maximal 2-absorbing second submodule} of a submodule
$K$ of $M$, if $N \subseteq K$ and there does not exist a 2-absorbing second submodule $H$ of $M$ such that $N \subset H \subset K$.
\end{defn}

\begin{lem}\label{l7.23}
 Let $M$ be an $R$-module. Then every 2-absorbing second submodule of $M$ is contained in a maximal 2-absorbing second submodule of $M$.
\end{lem}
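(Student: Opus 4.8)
The plan is a routine Zorn's Lemma argument on the poset of 2-absorbing second submodules of $M$ containing the given one, using Proposition \ref{p7.21} to control chains.

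Concretely, I would fix a 2-absorbing second submodule $N$ of $M$ and set
\[
\Sigma=\{\,H : H \text{ is a 2-absorbing second submodule of } M \text{ with } N\subseteq H\,\},
\]
partially ordered by inclusion. First note $\Sigma\neq\emptyset$ since $N\in\Sigma$. Next, I would take an arbitrary non-empty chain $\mathcal{C}=\{K_i\}_{i\in I}$ in $\Sigma$: each $K_i$ is a 2-absorbing second submodule of $M$ containing $N$, so by Proposition \ref{p7.21} the submodule $K:=\bigcup_{i\in I}K_i$ is again a 2-absorbing second submodule of $M$, and clearly $N\subseteq K$, so $K\in\Sigma$ is an upper bound for $\mathcal{C}$ in $\Sigma$. (Here one should just note that 2-absorbing second submodules are non-zero by definition, so the union is automatically non-zero, and that $N$ serves as a base point so the empty-chain case is covered by $N$ itself.) By Zorn's Lemma, $\Sigma$ then has a maximal element $H^{*}$.

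It remains to identify $H^{*}$ as a maximal 2-absorbing second submodule of $M$ in the sense of Definition \ref{d7.22} (with $K=M$). For this I would argue by contradiction: if there were a 2-absorbing second submodule $H$ of $M$ with $H^{*}\subset H\subset M$, then from $N\subseteq H^{*}\subset H$ we get $H\in\Sigma$ with $H^{*}\subsetneq H$, contradicting the maximality of $H^{*}$ in $\Sigma$. Hence no such $H$ exists, and $H^{*}$ is a maximal 2-absorbing second submodule of $M$ containing $N$.

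I do not expect any genuine obstacle here: the only substantive point is that the union of a chain of 2-absorbing second submodules is again 2-absorbing second, and that is exactly Proposition \ref{p7.21}. The remaining care is purely bookkeeping — ensuring $\Sigma$ is non-empty and that the chain-union lies in $\Sigma$ (i.e.\ still contains $N$), both of which are immediate.
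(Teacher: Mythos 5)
Your proof is correct and is precisely the argument the paper intends: the paper's own proof is the one-line remark that the result follows from Zorn's Lemma together with Proposition \ref{p7.21}, and your write-up simply fills in the routine details of that same argument.
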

\begin{proof}
This is proved easily by using Zorn's Lemma and Proposition \ref{p7.21}.
\end{proof}

\begin{thm}\label{c3.10}
Every Artinian $R$-module has only a finite
number of maximal 2-absorbing second submodules.
\end{thm}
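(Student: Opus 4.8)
The plan is to argue by contradiction, the one non-routine point being the choice of a family of submodules on which to apply the descending chain condition. So suppose $M$ had infinitely many maximal 2-absorbing second submodules, and let $\mathcal{A}$ denote the set of all submodules of $M$ that can be written as a sum of infinitely many pairwise distinct maximal 2-absorbing second submodules of $M$. Then $\mathcal{A}\neq\emptyset$ by our assumption, so, $M$ being Artinian, $\mathcal{A}$ has a minimal element, which I would write as $K=\sum_{i\in I}S_i$ with $I$ infinite and $\{S_i\}_{i\in I}$ a family of distinct maximal 2-absorbing second submodules of $M$.

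First I would record the consequence of minimality that for \emph{every} infinite subset $J\subseteq I$ one has $\sum_{i\in J}S_i=K$, since this partial sum again belongs to $\mathcal{A}$ and is contained in $K$. The main step is then to show that $K$ is itself a 2-absorbing second submodule of $M$. To that end, take $a,b\in R$ and a completely irreducible submodule $L$ of $M$ with $abK\subseteq L$; then $abS_i\subseteq L$ for every $i$, and since each $S_i$ is 2-absorbing second the index set $I$ is the union of the three sets $I_1=\{i\in I:aS_i\subseteq L\}$, $I_2=\{i\in I:bS_i\subseteq L\}$ and $I_3=\{i\in I:abS_i=0\}$. At least one of these is infinite; if, say, $I_1$ is infinite then $K=\sum_{i\in I_1}S_i$, so $aK=\sum_{i\in I_1}aS_i\subseteq L$, and the cases $I_2$, $I_3$ infinite give $bK\subseteq L$ and $abK=0$ respectively. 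As $K\neq 0$ (it contains the nonzero $S_i$), this is precisely the statement that $K$ is a 2-absorbing second submodule of $M$.

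To finish, each $S_i$ is a maximal 2-absorbing second submodule of $M$ contained in the 2-absorbing second submodule $K$ (one may also route this through Lemma \ref{l7.23}), so $S_i=K$ for every $i\in I$; this contradicts that $\{S_i\}_{i\in I}$ is an infinite family of \emph{distinct} submodules, and the theorem follows. I expect the genuine content to be concentrated in the step showing $K$ is 2-absorbing second: the three-way covering argument closes exactly because $\mathcal{A}$ was built from sums of \emph{infinitely} many maximal 2-absorbing second submodules, so that minimality upgrades to ``every infinite sub-sum equals $K$''; the analogous argument with merely finite sums would break down, since an infinite $I_j$ need not be cofinite in $I$.
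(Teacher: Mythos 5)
Your proof is correct, but it follows a genuinely different route from the paper's. The paper also argues by contradiction using the Artinian hypothesis to pick a minimal submodule $S$ with infinitely many maximal 2-absorbing second submodules; it then notes $S$ cannot itself be 2-absorbing second, extracts $a,b\in R$ and a completely irreducible $L$ witnessing this, and observes that every maximal 2-absorbing second submodule of $S$ must land in one of the three \emph{proper} submodules $(L:_Sa)$, $(L:_Sb)$, $(0:_Sab)$, each of which has only finitely many such submodules by minimality. You instead apply minimality to the family of submodules expressible as a sum of infinitely many distinct maximal 2-absorbing second submodules, upgrade minimality to ``every infinite sub-sum equals $K$,'' and then run a three-way pigeonhole on the index set to show $K$ is itself 2-absorbing second — an argument in the same spirit as Proposition \ref{p7.21} for chains — after which maximality of the $S_i$ forces them all to equal $K$, contradicting distinctness. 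Your version localizes all the work in one clean structural claim ($K$ is 2-absorbing second) and avoids the colon-submodule bookkeeping; the paper's version is closer to the classical ``finitely many maximal elements'' arguments for Artinian modules and reuses the same template in Theorem \ref{t8.26}. One small caveat common to both proofs: your final step (and the paper's) reads ``maximal'' in the standard poset sense, i.e.\ no 2-absorbing second submodule strictly larger than $S_i$ inside $M$, which is what the Zorn argument of Lemma \ref{l7.23} actually produces, even though Definition \ref{d7.22} is phrased with a doubly strict inclusion $N\subset H\subset K$; under that intended reading your argument is complete.
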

\begin{proof}
Suppose that there exists a non-zero submodule $N$ of $M$
such that it has an infinite number of maximal 2-absorbing second submodules. Let $S$ be a submodule of $M$ chosen minimal such that $S$ has an infinite number of maximal 2-absorbing second submodules. Then $S$ is not 2-absorbing second submodule. Thus there exist $a, b \in R$ and a completely irreducible submodule $L$ of $M$
such that $abS\subseteq L$ but $aS \not \subseteq L$, $bS \not \subseteq L$, and $abS \not=0$. Let $V$ be a maximal 2-absorbing second submodule of $M$ contained in $S$. Then $aV \subseteq L$ or $bV \subseteq L$ or $abV=0$. Thus $V \subseteq (L:_Ma)$  or $V \subseteq (L:_Mb)$ or $V \subseteq (0:_Mab)$. Therefore, $V \subseteq (L:_Sa)$  or $V \subseteq (L:_Sb)$ or $V \subseteq (0:_Sab)$. By the choice of $S$, the modules $(L:_Sa)$, $(L:_Sb)$, and $(0:_Sab)$ have only finitely many maximal 2-absorbing second submodules. Therefore, there is only a finite number of possibilities for the module $S$ which is a contradiction.
\end{proof}

\section{Strongly 2-absorbing second submodules}
\begin{defn}\label{d2.2}
Let $N$ be a non-zero submodule of an $R$-module $M$.
We say that $N$ is a \emph{strongly 2-absorbing second submodule of} $M$
if whenever $a, b \in R$, $L_1, L_2$ are completely irreducible submodules
of $M$, and $abN\subseteq L_1 \cap L_2$, then $aN\subseteq L_1 \cap L_2$ or
$bN\subseteq L_1 \cap L_2$ or $ab \in Ann_R(N)$. This can be regarded as a dual notion of the strongly 2-absorbing submodule.
\end{defn}

\begin{ex}\label{e2.2}
Clearly every strongly 2-absorbing second submodule is a 2-absorbing second submodule. But the converse is not true in general. For example, consider $\Bbb Z$ as a $\Bbb Z$-module. Then $2\Bbb Z$ is a
2-absorbing second submodule of $\Bbb Z$ but it is not  a
strongly 2-absorbing second submodule of $\Bbb Z$.
\end{ex}

\begin{thm}\label{t1.5}
Let $N$ be a submodule of an $R$-module $M$. The following statements
are equivalent:
\begin{itemize}
  \item [(a)] $N$ is a strongly 2-absorbing second submodule of $M$;
  \item [(b)] If $N \not =0$, $IJN \subseteq K$ for some ideals $I, J$ of $R$ and a
submodule $K$ of $M$, then $IN \subseteq K$ or $JN \subseteq K$ or $IJ \in Ann_R(N)$;
  \item [(c)] $N \not =0$ and for each $a , b \in R$, we have $abN=aN$ or $abN=bN$ or $abN=0$.
\end{itemize}
\end{thm}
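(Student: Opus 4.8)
The plan is to prove the cycle of implications $(a)\Rightarrow(c)\Rightarrow(b)\Rightarrow(a)$, since $(c)$ is the most elementary condition to manipulate and $(b)$ is the natural "ideal" reformulation.

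First I would prove $(a)\Rightarrow(c)$. Assume $N$ is strongly $2$-absorbing second and fix $a,b\in R$. The containment $abN\subseteq aN$ is automatic, so the content is that $abN$ cannot be a proper submodule of $aN$ unless it already equals $bN$ or $0$. Suppose to the contrary that $abN\subsetneq aN$, $abN\subsetneq bN$, and $abN\neq 0$. Using Remark \ref{r2.1}, pick a completely irreducible submodule $L_1$ with $abN\subseteq L_1$ but $aN\not\subseteq L_1$, and similarly a completely irreducible $L_2$ with $abN\subseteq L_2$ but $bN\not\subseteq L_2$. Then $abN\subseteq L_1\cap L_2$, so by Definition \ref{d2.2} we must have $aN\subseteq L_1\cap L_2$, or $bN\subseteq L_1\cap L_2$, or $ab\in Ann_R(N)$ — each of which contradicts our choices. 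Hence $abN=aN$ or $abN=bN$ or $abN=0$.

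Next, $(c)\Rightarrow(b)$. Assume $N\neq 0$ and the trichotomy $abN\in\{aN,bN,0\}$ holds for all $a,b$, and suppose $IJN\subseteq K$ for ideals $I,J$ and a submodule $K$. Assume $IN\not\subseteq K$ and $JN\not\subseteq K$; we must show $IJ\subseteq Ann_R(N)$. The strategy mirrors the proof of Lemma \ref{l1.4}: there exist $a\in I$ with $aN\not\subseteq K$ and $b\in J$ with $bN\not\subseteq K$. For this $a$ and any $c\in J$, apply $(c)$ to the pair $(a,c)$: since $aN\not\subseteq K$ but $acN\subseteq IJN\subseteq K$, the case $acN=aN$ is impossible, so $acN=cN$ or $acN=0$. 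In the first case $cN=acN\subseteq K$; either way, combining over all such $c$ requires a little care, but the upshot is that $aJN\subseteq K$ forces (via $(c)$ again, pairing $a$ with elements realizing the relevant failures) $aJ\subseteq Ann_R(N)$, and symmetrically $Ib\subseteq Ann_R(N)$, and then $(I\setminus(K:_RN))J\subseteq Ann_R(N)$ and $I(J\setminus(K:_RN))\subseteq Ann_R(N)$. Finally, for arbitrary $c\in I$, $d\in J$ one writes $(a+c)(b+d)N$ and expands, using $(c)$ on the pair $(a+c,b+d)$, exactly as in Lemma \ref{l1.4}, to conclude $cd\in Ann_R(N)$; hence $IJ\subseteq Ann_R(N)$.

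Finally, $(b)\Rightarrow(a)$ is immediate: given $a,b\in R$ and completely irreducible $L_1,L_2$ with $abN\subseteq L_1\cap L_2$, apply $(b)$ with the principal ideals $I=Ra$, $J=Rb$ and $K=L_1\cap L_2$ to get $aN\subseteq L_1\cap L_2$ or $bN\subseteq L_1\cap L_2$ or $ab\in Ann_R(N)$, which is precisely Definition \ref{d2.2}. I expect the main obstacle to be the bookkeeping in $(c)\Rightarrow(b)$: the trichotomy in $(c)$ is only stated for single ring elements, so one must carefully harvest, element by element, the consequences $aJ\subseteq Ann_R(N)$ and $Ib\subseteq Ann_R(N)$ before the $(a+c)(b+d)$ expansion can be run, and one should double-check that the "$acN=cN$" branch is handled uniformly rather than producing a submodule of $K$ that need not contain $cN$ for every $c$ simultaneously. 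All other steps are routine once Remark \ref{r2.1} and the expansion trick from Lemma \ref{l1.4} are in hand.
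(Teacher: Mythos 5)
Your proposal is correct, but it organizes the equivalence differently from the paper. The paper's route is $(a)\Rightarrow(b)$ directly: it invokes Lemma \ref{l1.4} to get, for every completely irreducible $L$ containing $K$, that $IN\subseteq L$ or $JN\subseteq L$ (when $IJ\not\subseteq Ann_R(N)$), and then runs a two-witness contradiction argument with $L_1,L_2$ and the intersection $L_1\cap L_2$ to remove the dependence on $L$; afterwards $(c)$ is deduced as an easy consequence of $(b)$ by taking $K=abN$. You instead prove the cycle $(a)\Rightarrow(c)\Rightarrow(b)\Rightarrow(a)$: your $(a)\Rightarrow(c)$ is a short direct argument (choosing completely irreducible witnesses $L_1\supseteq abN$ with $aN\not\subseteq L_1$ and $L_2\supseteq abN$ with $bN\not\subseteq L_2$ via Remark \ref{r2.1}, then applying Definition \ref{d2.2} to $L_1\cap L_2$) that the paper does not need, and your hard step is $(c)\Rightarrow(b)$, where you re-run the algebra of Lemmas \ref{l1.3} and \ref{l1.4} with an arbitrary submodule $K$ in place of a completely irreducible $L$. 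That transplantation does go through: the trichotomy in $(c)$ gives the element-wise statement ``$abN\subseteq K$ implies $aN\subseteq K$ or $bN\subseteq K$ or $ab\in Ann_R(N)$'' for \emph{arbitrary} $K$, which is exactly what the $(b+c)$ and $(a+c)(b+d)$ expansion arguments consume, so the bookkeeping you flag ($aJ\subseteq Ann_R(N)$ for $a\in I\setminus(K:_RN)$, and symmetrically, before the final expansion) closes without trouble. What each approach buys: the paper reuses Lemma \ref{l1.4} as a black box but pays for it with the somewhat delicate passage from ``every completely irreducible $L\supseteq K$'' to $K$ itself; your version avoids all completely-irreducible bookkeeping in the main step by routing through the cleanest condition $(c)$, at the cost of repeating the expansion computation rather than citing it. One small point common to both treatments: condition $(b)$ as stated is vacuous when $N=0$, so strictly speaking $(b)\Rightarrow(a)$ needs $N\neq 0$ read as part of $(b)$ rather than as a hypothesis of an implication; this is an issue with the statement, not with your argument.
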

\begin{proof}
$(a) \Rightarrow (b)$. Assume that $IJN \subseteq K$ for some ideals $I, J$ of $R$, a submodule $K$ of $M$, and $IJ \not \subseteq Ann_R(N)$.
Then by Lemma \ref{l1.4}, for all completely irreducible submodules $L$ of $M$ with $K \subseteq L$ either $IN \subseteq L$ or $JN\subseteq L$. If $IN \subseteq L$ (resp. $JN \subseteq L$) for all completely irreducible submodules $L$ of $M$ with $K \subseteq L$, we are done. Now suppose that $L_1$ and $L_2$
are two completely irreducible submodules of $M$ with $K \subseteq L_1$, $K \subseteq L_2$, $IN \not \subseteq L_1$, and $JN \not \subseteq L_2$.
Then $IN  \subseteq L_2$
and $JN  \subseteq L_1$. Since $IJN \subseteq L_1 \cap L_2$, we
have either $IN \subseteq L_1 \cap L_2$ or $JN \subseteq L_1 \cap L_2$.
As $IN \subseteq L_1 \cap L_2$, we have $IN \subseteq L_1$ which is a
contradiction. Similarly from $JN \subseteq L_1 \cap L_2$ we get a contradiction.

$(b) \Rightarrow (a)$. This is clear.

$(a) \Rightarrow (c)$. By part (a), $N \not =0$. Let $a, b \in R$. Then $abN \subseteq abN$ implies that $aN \subseteq abN$ or $bN \subseteq abN$ or $abN=0$. Thus $abN=aN$ or $abN=bN$ or $abN=0$.

$(c) \Rightarrow (a)$. This is clear.
\end{proof}

\begin{lem}
Let $M$ be an $R$-module, $N\subset K$ be two submodules of $M$, and $K$ be a strongly 2-absorbing second submodule of $M$. Then $K/N$ is a strongly 2-absorbing second submodule of $M/N$.
\end{lem}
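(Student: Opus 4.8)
The plan is to verify the definition of strongly $2$-absorbing second directly for $K/N$ as a submodule of $M/N$, using the characterization in Theorem \ref{t1.5}(c), which is the most convenient form here since it avoids having to track completely irreducible submodules of the quotient. First I would note that $K/N$ is non-zero because $N \subset K$ is strict. Then, given $a,b \in R$, I would apply Theorem \ref{t1.5}(c) to the strongly $2$-absorbing second submodule $K$ of $M$ to get that $abK = aK$, or $abK = bK$, or $abK = 0$.

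The key step is to push each of these three alternatives through the canonical projection $\pi\colon M \to M/N$. Since $\pi$ is $R$-linear and surjective onto $K/N$ when restricted to $K$, we have $\pi(aK) = a(K/N)$, $\pi(bK) = b(K/N)$, and $\pi(abK) = ab(K/N)$. Hence $abK = aK$ immediately gives $ab(K/N) = a(K/N)$, and similarly for the second case. In the third case, $abK = 0$ gives $abK \subseteq N$, so $ab(K/N) = 0$ in $M/N$. In every case we land in one of the three alternatives of Theorem \ref{t1.5}(c) for $K/N$, so $K/N$ is a strongly $2$-absorbing second submodule of $M/N$.

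I expect the only subtlety — not really an obstacle — to be bookkeeping about the quotient: being careful that $ab(K/N)$ really equals $(abK + N)/N$ and that the inclusion $abK \subseteq N$ is exactly what ``$ab(K/N) = 0$'' means. One should also remark at the outset that $N \subseteq K$ guarantees $K/N$ is a well-defined submodule of $M/N$, and that strictness of the inclusion is what makes it non-zero, so the hypothesis $N \subset K$ (rather than $N \subseteq K$) is used precisely here. No appeal to completely irreducible submodules of $M/N$ is needed with this route; alternatively one could argue via Definition \ref{d2.2} directly, using that completely irreducible submodules of $M/N$ correspond to completely irreducible submodules of $M$ containing $N$, but the Theorem \ref{t1.5}(c) argument is cleaner.
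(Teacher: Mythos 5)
Your proposal is correct: reducing to the characterization of Theorem \ref{t1.5}(c) and pushing the three alternatives $abK=aK$, $abK=bK$, $abK=0$ through the projection $M\to M/N$ gives exactly the condition (c) for $K/N$, with the strict inclusion $N\subset K$ guaranteeing $K/N\neq 0$. The paper records this lemma's proof only as ``straightforward,'' and your argument is the natural way to make that precise, so there is no substantive divergence to report.
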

\begin{proof}
This is straightforward.
\end{proof}

\begin{prop}\label{p1.6}
Let $N$ be a strongly 2-absorbing second submodule of an $R$-module $M$.
Then we have the following.
\begin{itemize}
  \item [(a)] $Ann_R(N)$ is a 2-absorbing ideal of $R$.
  \item [(b)] If $K$ is a submodule of $M$ such that $N \not \subseteq K$,
   then $(K:_RN)$ is a 2-absorbing ideal of $R$.
  \item [(c)] If $I$ is an ideal of $R$, then  $I^nN=I^{n+1}N$, for all $n \geq 2$.
 \item [(d)] If $(L_1 \cap L_2:_RN)$ is a prime ideal of $R$ for
   all completely irreducible submodules $L_1$ and $L_2$ of $M$
   such that $N \not \subseteq L_1 \cap L_2$, then $Ann_R(N)$ is a prime ideal
   of $R$.
\end{itemize}
\end{prop}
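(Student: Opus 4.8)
The plan is to prove the four parts in order, leaning heavily on the already-established results for 2-absorbing second submodules, since every strongly 2-absorbing second submodule is in particular 2-absorbing second (Example \ref{e2.2}). For part (a), I would argue that $M/N$ need not be cocyclic, so Proposition \ref{c11.6}(b) does not apply directly; instead I invoke Theorem \ref{t1.5}. Write $Ann_R(N) = (0 :_R N)$ and observe that $0$ is a submodule of $M$, so if $abc \in Ann_R(N)$ then $(ab)(cR)N \subseteq 0$, hence by Theorem \ref{t1.5}(b) (applied with the principal ideals $I = (ab)$, $J = (c)$, and $K = 0$) we get $abN = 0$ or $cN = 0$ or $abc \in Ann_R(N)$; one then unpacks $abN = 0$ again via Theorem \ref{t1.5} to land in one of the three 2-absorbing alternatives $ab \in Ann_R(N)$, $ac \in Ann_R(N)$, $bc \in Ann_R(N)$. (Alternatively, and more cleanly, one applies Theorem \ref{t1.5}(b) directly with $I=(a)$, $J=(bc)$, $K=0$, then again with $(b),(c)$.) Part (b) is the same argument with $0$ replaced by $K$: if $abc \in (K :_R N)$ then $(ab)(c)N \subseteq K$, so Theorem \ref{t1.5}(b) yields $abN \subseteq K$ or $cN \subseteq K$ or $abc \in Ann_R(N) \subseteq (K:_RN)$, and a second application of Theorem \ref{t1.5}(b) to $abN = (a)(b)N \subseteq K$ finishes it; properness of $(K:_RN)$ follows from $N \not\subseteq K$.

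For part (c), I reduce as in Proposition \ref{c11.6}(c) to showing $I^2 N = I^3 N$. The inclusion $I^3N \subseteq I^2N$ is trivial. For the reverse, apply Theorem \ref{t1.5}(b) to $I^3 N = (I)(I^2)N \subseteq I^3 N$: this gives $IN \subseteq I^3N$ or $I^2N \subseteq I^3N$ or $I^3 = I \cdot I^2 \subseteq Ann_R(N)$. In the first case $I^2N \subseteq IN \subseteq I^3N$; in the third case $I^3N = 0 \subseteq I^3N$ forces $I^2N = I^3N$ after one more application (or directly: $I^3 \subseteq Ann_R(N)$ together with $I^2 N = (I)(I)N$ and Theorem \ref{t1.5}(b) again gives $IN \subseteq Ann_R(N)\cdot$-type conclusions yielding $I^2N \subseteq I^3N$); the middle case is immediate. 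Here the mild annoyance is making the "$I^3 \subseteq Ann_R(N)$" branch actually yield $I^2N = I^3N$ rather than a weaker statement, which I expect to require one extra invocation of Theorem \ref{t1.5}(b) with the factorization $I^2 = I\cdot I$.

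For part (d), suppose $Ann_R(N)$ is not prime, witnessed by $a,b \in R$ with $ab \in Ann_R(N)$ but $aN \neq 0$ and $bN \neq 0$. Since every submodule is an intersection of completely irreducible submodules, pick completely irreducible $L_1$ with $aN \not\subseteq L_1$ and completely irreducible $L_2$ with $bN \not\subseteq L_2$. Then $abN = 0 \subseteq L_1 \cap L_2$, so by the definition of strongly 2-absorbing second we get $aN \subseteq L_1 \cap L_2 \subseteq L_1$ or $bN \subseteq L_1 \cap L_2 \subseteq L_2$ or $ab \in Ann_R(N)$ — the first two contradict the choice of $L_1,L_2$, so we conclude $ab \in Ann_R(N)$, which we already had, and this gives no contradiction yet. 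The point I expect to be the real obstacle is that the hypothesis is about $(L_1 \cap L_2 :_R N)$ being prime, not directly about the trichotomy, so instead I should argue: with $aN \not\subseteq L_1$ and $bN \not\subseteq L_2$ and $abN = 0 \subseteq L_1 \cap L_2$, we have $ab \in (L_1\cap L_2 :_R N)$; by primeness of $(L_1 \cap L_2 :_R N)$, either $a \in (L_1 \cap L_2 :_R N)$, i.e. $aN \subseteq L_1 \cap L_2 \subseteq L_1$, or $b \in (L_1 \cap L_2 :_R N)$, i.e. $bN \subseteq L_2$ — both contradicting the choice of the $L_i$. Hence no such $a,b$ exist and $Ann_R(N)$ is prime. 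The only care needed is that $N \not\subseteq L_1 \cap L_2$ so that the hypothesis applies: this holds because $aN \not\subseteq L_1$ already forces $N \not\subseteq L_1 \supseteq L_1 \cap L_2$.
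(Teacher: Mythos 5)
Your part (d) is correct and, after you discard your first (admittedly non-contradictory) attempt via the definition, it coincides with the paper's argument; the check that $N\not\subseteq L_1\cap L_2$ is a good touch. Parts (b) and (c) are also structured like the paper's. The genuine gap is in part (a), on which the leftover branches of (b) and (c) both depend. You apply Theorem \ref{t1.5}(b) to $(ab)(c)N\subseteq 0$ (or to $(a)(bc)N\subseteq 0$), but the third alternative of that trichotomy is $IJ\subseteq Ann_R(N)$, which in this instance reads $abc\in Ann_R(N)$ --- exactly your hypothesis. The disjunction ``$abN=0$ or $cN=0$ or $abc\in Ann_R(N)$'' is therefore satisfied by its last clause and carries no information; you never establish $abN=0$ or $cN=0$, so there is nothing to ``unpack.'' The same vacuity undermines your fallback for the $I^3\subseteq Ann_R(N)$ branch of (c): applying Theorem \ref{t1.5}(b) once more to $I\cdot I^2\cdot N\subseteq 0$ simply returns the hypothesis $I^3\subseteq Ann_R(N)$ again.

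The paper's fix is to apply the trichotomy to a containment that is \emph{not} the hypothesis: by Theorem \ref{t1.5}(c), for any $a,b\in R$ one has unconditionally $abN=aN$ or $abN=bN$ or $abN=0$. Combined with $abcN=0$ this settles (a) at once: if $abN=0$ then $ab\in Ann_R(N)$; if $aN=abN$ then $acN=abcN=0$, so $ac\in Ann_R(N)$; symmetrically $bc\in Ann_R(N)$ in the remaining case. With (a) in hand, the $abc\in Ann_R(N)$ branch of (b) follows because $Ann_R(N)\subseteq (K:_RN)$ and $Ann_R(N)$ is 2-absorbing, and the $I^3\subseteq Ann_R(N)$ branch of (c) follows because $I\cdot I\cdot I\subseteq Ann_R(N)$ then forces $I^2\subseteq Ann_R(N)$, whence $I^2N=0=I^3N$. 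You should rebuild (a) along these lines and route the annihilator branches of (b) and (c) explicitly through it, as the paper does.
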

\begin{proof}
(a) Let $a, b, c \in R$ and $abc\in Ann_R(N)$.
Then $abN \subseteq abN$ implies that $aN \subseteq abN$ or $bN \subseteq abN$ or $abN=0$
by Theorem \ref{t1.5}. If  $abN=0$, then we are done. If $aN \subseteq abN$,
then $caN \subseteq cabN=0$. In other case, we do the same.

(b) Let $a, b, c \in R$ and $abc\in (K:_RN)$.
Then $acN \subseteq K$  or $bcN \subseteq K$ or $abcN=0$.
If $acN \subseteq K$  or $bcN \subseteq K$, then we are done.
If $abcN=0$, then the result follows from part (a).

(c) It is enough to show that $I^2N=I^3N$.
It is clear that $I^3N \subseteq I^2N$.
Since $N$ is strongly 2-absorbing second submodule, $I^3N \subseteq I^3N$
implies that $I^2N \subseteq I^3N$ or $IN \subseteq I^3N$ or $I^3N=0$ by
Theorem \ref{t1.5}. If $I^2N \subseteq I^3N$ or $IN \subseteq I^3N$, then
we are done. If $I^3N=0$, then the result follows from part (a).

(d) Suppose that $a, b \in R$ and $abN=0$.
Assume contrary that $aN \not =0$ and $bN \not =0$.
Then there exist completely irreducible submodules $L_1$ and $L_2$ of $M$
such that $aN \not \subseteq L_1$ and $bN \not \subseteq L_2$.
Now since $(L_1 \cap L_2:_RN)$ is a prime ideal of $R$,
$0=abN \subseteq L_1 \cap L_2$ implies
that $bN\subseteq L_1 \cap L_2$ or $aN\subseteq L_1 \cap L_2$.
In any cases, we have a contradiction.
\end{proof}

\begin{rem}\label{r1.7} (\cite[Theorem 2.4]{Ba07}).
 If $I$ is a 2-absorbing ideal of $R$, then one of the following statements must hold:
\begin{itemize}
  \item [(a)] $\sqrt{I} = P$ is a prime ideal of $R$ such that $P^2 \subseteq I$;
  \item [(b)] $\sqrt{I} =  P \cap Q$, $PQ \subseteq I$,
and $\sqrt{I}^2 \subseteq I$ where $P$ and $Q$ are the only distinct
prime ideals of $R$ that are minimal over $I$.
\end{itemize}
\end{rem}

\begin{thm}\label{t1.8}
If $N$ is a strongly 2-absorbing second submodule of $M$
and $N \not \subseteq K$, then either
$(K:_RN)$ is a prime ideal of $R$ or there exists an
element $a\in R$ such that $(K:_RaN)$
is a prime ideal of $R$.
\end{thm}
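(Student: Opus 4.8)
The plan is to pass to the colon ideal $I := (K :_R N)$. Since $N \not\subseteq K$, this ideal is proper, and by Proposition \ref{p1.6}(b) it is a $2$-absorbing ideal of $R$. If $I$ is already prime, the first alternative of the statement holds and there is nothing to do, so the substantive case is when $I$ is not prime. The observation that makes an element $a$ appear is the identity $(K :_R aN) = \{\, r \in R : ra \in I \,\} = (I :_R a)$, valid for every $a \in R$: it is therefore enough to produce some $a \notin I$, which already forces $(I :_R a) \neq R$, such that the colon ideal $(I :_R a)$ is prime.

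To choose $a$ I would apply the structure theorem of Remark \ref{r1.7} to $I$. In case (a) there, $\sqrt I = P$ is prime with $P^2 \subseteq I$, and since $I$ is not prime we have $I \subset P$; I would take $a \in P \setminus I$. In case (b), $\sqrt I = P \cap Q$ with $PQ \subseteq I$ and $P \neq Q$ the two minimal primes over $I$; distinct minimal primes are incomparable, so $Q \setminus P$ is nonempty, and I would take $a \in Q \setminus P$ (note that $a \notin I$ here, because $I \subseteq \sqrt I \subseteq P$).

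Then $(I :_R a)$ is prime, by a short verification uniform across the two cases: if $rs \in (I :_R a)$, i.e.\ $rsa \in I$, then the $2$-absorbing property of $I$ gives $rs \in I$ or $ra \in I$ or $sa \in I$, and the last two options already say $r \in (I :_R a)$ or $s \in (I :_R a)$; in the remaining case $rs \in I \subseteq \sqrt I \subseteq P$, so $r \in P$ or $s \in P$ by primality of $P$, and then, say for $r$, one has $ra \in P^2 \subseteq I$ in case (a) (since $a \in P$) and $ra \in PQ \subseteq I$ in case (b) (since $a \in Q$), so $r \in (I :_R a)$ in either case, and symmetrically for $s$. Hence $(K :_R aN) = (I :_R a)$ is a prime ideal, which is the second alternative.

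I expect the one real difficulty to be the selection of $a$: a naive choice of witnesses $a, b \notin I$ with $ab \in I$ does not by itself make $(I :_R b)$ prime, since the case $rs \in I$ in the verification above cannot then be resolved. What makes the argument go through is precisely the extra information supplied by Remark \ref{r1.7}, namely the identification of the minimal primes over $I$ together with the containments $P^2 \subseteq I$ (respectively $PQ \subseteq I$), which is exactly what converts the conclusion $r \in P$ or $s \in P$ into membership in $(I :_R a)$.
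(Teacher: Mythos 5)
Your proof is correct, and it takes a genuinely different route from the paper's. You apply Badawi's structure theorem (Remark \ref{r1.7}) to the colon ideal $I=(K:_R N)$ itself, which Proposition \ref{p1.6}(b) guarantees is $2$-absorbing, and from that point the module $N$ disappears entirely: via the identity $(K:_R aN)=(I:_R a)$ everything reduces to the purely ideal-theoretic fact that a $2$-absorbing ideal is either prime or admits some $a\notin I$ with $(I:_R a)$ prime, which you verify by choosing $a\in P\setminus I$ in case (a) and $a\in Q\setminus P$ in case (b) and exploiting $P^{2}\subseteq I$, respectively $PQ\subseteq I$; all the steps check out, including the properness of $(I:_R a)$ and the incomparability of the two minimal primes. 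The paper instead applies Remark \ref{r1.7} to $Ann_R(N)$ (via Proposition \ref{p1.6}(a)), splits on whether $P\subseteq (K:_R N)$, and runs the primality verification at the module level using Theorem \ref{t1.5}: from $abN\subseteq K$ it gets $aN\subseteq K$ or $bN\subseteq K$ or $ab\in Ann_R(N)\subseteq P$, and in the remaining subcase it picks $a\in P$ with $aN\not\subseteq K$, shows $P\subseteq (K:_R aN)$ (resp.\ $Q\subseteq(K:_R aN)$) from $P^{2}\subseteq Ann_R(N)$ (resp.\ $PQ\subseteq Ann_R(N)$), and repeats the argument. Your version buys a cleaner and more general statement --- it isolates a lemma about $2$-absorbing ideals that needs no module hypothesis beyond Proposition \ref{p1.6}(b) --- and it sidesteps the paper's somewhat compressed ``similar argument'' step, which implicitly requires rerunning the module-level computation with $K$ replaced by $(K:_M a)$; the paper's version keeps the analysis anchored to $\sqrt{Ann_R(N)}$, in line with the neighbouring results such as Theorems \ref{t11.10} and \ref{t1.9}.
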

\begin{proof}
By Preposition \ref{p1.6} and Remark \ref{r1.7}, we have one of the following two cases.
 \begin{enumerate}
   \item [(a)]
 Let $\sqrt{Ann_R(N)} = P$, where $P$ is a prime ideal of $R$. We show that $(K:_RN)$
is a prime ideal of $R$ when $P \subseteq (K:_RN)$.
Assume that $a, b \in R$ and $ab \in (K:_RN)$. Hence $aN \subseteq K$ or $bN \subseteq K$ or $ab \in Ann_R(N)$. If either $aN \subseteq K$ or $bN \subseteq K$, we are done. Now assume that $ab \in Ann_R(N)$. Then $ab \in P$ and so
$a\in P$ or $b\in P$. Thus, $a\in (K:_RN)$ or $b\in (K:_RN)$ and
the assertion follows. If $P \not \subseteq (K:_RN)$, then there exists $a \in P$ such that $aN \not \subseteq K$. By Remark \ref{r1.7}, $P^2 \subseteq Ann_R(N) \subseteq (K:_RN)$, thus
$P \subseteq  (K:_RaN)$. Now a similar argument shows
that $(K:_RaN)$ is a prime ideal of $R$.
  \item [(b)]
Let $\sqrt{Ann_R(N)} =  P \cap Q$, where $P$ and $Q$ are
distinct prime ideals of $R$. If
$P \subseteq (K:_RN)$, then the result follows by a similar proof to that of part (a).
Assume that $P \not  \subseteq (K:_RN)$. Then there exists $a \in P$
such that $aN \not \subseteq  K$.  By Remark \ref{r1.7},
we have $PQ \subseteq Ann_R(N) \subseteq  (K:_RN)$.
Thus, $Q \subseteq (K:_RaN)$ and the result follows by a
similar proof to that of part (a).
 \end{enumerate}
 \end{proof}

Let $M$ be an $R$-module. A prime ideal $P$ of $R$ is said to be a
\emph{coassociated prime} of $M$ if there exists a cocyclic
homomorphic image $T$ of $M$ such that $P=Ann_R(T)$.  The set of all
coassociated prime ideals of $M$ is denoted by $Coass_R(M)$
\cite{Y97}.
\begin{thm}\label{t1.9}
Let $N$ be a strongly 2-absorbing second submodule of an $R$-module $M$. Then we have the following.
\begin{itemize}
 \item [(a)] If $\sqrt{Ann_R(N)} =P$ for some prime ideal $P$ of $R$, $L_1$ and $L_2$ are completely irreducible submodules of $M$ such that $N \not \subseteq L_1$, and $N \not \subseteq L_2$, then either $\sqrt{(L_1 :_R N)} \subseteq \sqrt{(L_2 :_R N)}$ or $\sqrt{(L_2 :_R N)} \subseteq \sqrt{(L_1 :_R N)}$. Hence, $Coass_R(N)$ is a totally ordered set.

 \item [(b)] If $\sqrt{Ann_R(N)} =P\cap Q$ for some prime ideals $P$ and $Q$ of $R$, $L_1$ and $L_2$ are completely irreducible submodules of $M$ such that $N \not \subseteq L_1$ and $N \not \subseteq L_2$, and $P \subseteq \sqrt{(L_1 :_R N)}\cap\sqrt{(L_2 :_R N)}$, then either $\sqrt{(L_1 :_R N)} \subseteq \sqrt{(L_2 :_R N)}$ or $\sqrt{(L_2 :_R N)} \subseteq \sqrt{(L_1 :_R N)}$.  Hence, $Coass_R(N)$ is the union of two totally ordered sets.
\end{itemize}
\end{thm}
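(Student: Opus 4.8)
The plan is to treat (a) and (b) together, since (b) is a routine refinement of (a) once the right case analysis is set up. For part (a), I would argue by contradiction: suppose neither $\sqrt{(L_1:_RN)}\subseteq\sqrt{(L_2:_RN)}$ nor the reverse inclusion holds. Then there exist $a\in\sqrt{(L_1:_RN)}\setminus\sqrt{(L_2:_RN)}$ and $b\in\sqrt{(L_2:_RN)}\setminus\sqrt{(L_1:_RN)}$. Pick a positive integer $t$ large enough that simultaneously $a^tN\subseteq L_1$ and $b^tN\subseteq L_2$ (take the max of the two exponents). The key move is to consider the element $(a^tb^t)N$, or better the product of radicals: I want to feed something into Theorem \ref{t1.5}(c) or directly into the definition of strongly 2-absorbing second. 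Observe $a^tb^tN\subseteq a^tN\cap b^tN\subseteq L_1\cap L_2$, so by Definition \ref{d2.2} (applied to the scalars $a^t,b^t$ and the completely irreducible submodules $L_1,L_2$) we get $a^tN\subseteq L_1\cap L_2$, or $b^tN\subseteq L_1\cap L_2$, or $a^tb^t\in Ann_R(N)$.

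In the first case $a^tN\subseteq L_2$ gives $a\in\sqrt{(L_2:_RN)}$, contradicting the choice of $a$; the second case symmetrically contradicts the choice of $b$. So we are forced into $a^tb^t\in Ann_R(N)$, hence $ab\in\sqrt{Ann_R(N)}=P$. Since $P$ is prime, $a\in P$ or $b\in P$. By Theorem \ref{t11.10}(a) (using that $\sqrt{Ann_R(N)}=P$ and $N\not\subseteq L_i$), $\sqrt{(L_i:_RN)}$ is a prime ideal containing $P$, for $i=1,2$. Thus $a\in P\subseteq\sqrt{(L_1:_RN)}$ and $a\in P\subseteq\sqrt{(L_2:_RN)}$; either way one of $a,b$ lies in \emph{both} radicals, again contradicting how $a$ and $b$ were chosen. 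This closes part (a) for the two radicals. The statement about $Coass_R(N)$ then follows because a coassociated prime of $N$ is by definition $Ann_R(N/L)$ for a completely irreducible submodule $L$ with $N\not\subseteq L$ (equivalently $N/L$ cocyclic), and $Ann_R(N/L)=(L:_RN)$; moreover each such ideal, being $\sqrt{(L:_RN)}$ by Theorem \ref{t11.10}(a) (it is already prime, hence equals its radical), is comparable to every other, so $Coass_R(N)$ is totally ordered by inclusion.

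For part (b), the only change is in the last case: from $a^tb^t\in Ann_R(N)$ we get $ab\in\sqrt{Ann_R(N)}=P\cap Q$, so $ab\in P$ or $ab\in Q$; and the hypothesis $P\subseteq\sqrt{(L_1:_RN)}\cap\sqrt{(L_2:_RN)}$ together with Theorem \ref{t11.10}(b) tells us each $\sqrt{(L_i:_RN)}$ is prime and contains $P$. If $ab\in P$ we proceed exactly as before. If $ab\in Q$ but $ab\notin P$, the primeness of $\sqrt{(L_i:_RN)}$ still forces $a\in\sqrt{(L_i:_RN)}$ or $b\in\sqrt{(L_i:_RN)}$ for each $i$, and the same bookkeeping yields a contradiction. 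Splitting $Coass_R(N)$ according to whether the (prime) ideal $(L:_RN)$ contains $P$ or instead contains $Q$ exhibits it as a union of two totally ordered subsets.

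I expect the main obstacle to be purely organizational: keeping straight, across the case split, \emph{which} of $a$, $b$ is being shown to lie in \emph{which} radical, and invoking Theorem \ref{t11.10} with exactly the right $L_i$ so that each $\sqrt{(L_i:_RN)}$ is known to be prime before the primeness is used. There is no serious technical difficulty — all the heavy lifting (that $Ann_R(N)$ is $2$-absorbing, that the radicals of the relevant colon ideals are prime) has already been done in Proposition \ref{p1.6} and Theorem \ref{t11.10}; the present statement is essentially the observation that two prime ideals each containing the radical of a $2$-absorbing ideal, and each arising as $\sqrt{(L_i:_RN)}$, cannot be incomparable.
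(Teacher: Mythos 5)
Your argument is correct and essentially the same as the paper's: both apply the strongly 2-absorbing second property to $a^t$, $b^t$ and the completely irreducible submodules $L_1,L_2$ (via $abN\subseteq L_1\cap L_2$), then use the primeness of $P$ together with the containment $P=\sqrt{Ann_R(N)}\subseteq\sqrt{(L_i:_RN)}$ to force a contradiction, and identify $Coass_R(N)$ with the prime ideals $(L:_RN)$ in the same way. The only differences are cosmetic: the paper fixes $a\in\sqrt{(L_1:_RN)}\setminus\sqrt{(L_2:_RN)}$ and shows each $b\in\sqrt{(L_2:_RN)}$ lies in $\sqrt{(L_1:_RN)}$ instead of your symmetric double contradiction, and in (b) note that $ab\in\sqrt{Ann_R(N)}=P\cap Q$ gives $ab\in P$ outright, so your second subcase is vacuous and the argument only gets easier.
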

\begin{proof}
(a) Assume that $\sqrt{(L_1 :_R N)} \not \subseteq \sqrt{(L_2 :_R N)}$. We show that $\sqrt{(L_2 :_R N)} \subseteq \sqrt{(L_2 :_R N)}$. Suppose that $a \in \sqrt{(L_1 :_R N)}$ and $b \in \sqrt{(L_2 :_R N)}$. Then there exists a positive integer $s$ such that $a^sN \subseteq L_1$, $b^sN \subseteq L_2$, and $b^sN \not \subseteq L_1$.
If $a^sN \subseteq L_1 \cap L_2$, then $a^sN \subseteq L_2$ and so $a \in  \sqrt{(L_2 :_R N)}$. Now assume that $a^sN \not \subseteq L_1 \cap L_2$. Then $a^sb^s \in Ann_R(N)$ because $a^sb^sN \subseteq L_1\cap L_2$, $a^sN \not \subseteq L_1 \cap \L_2$, and $b^sN \not \subseteq L_1 \cap L_2$. Thus, $ab \in P$. If $b \in P$, then $b^sN \subseteq L_1$ which is a contradiction. Hence $a \in P$ and so $a \in  \sqrt{(L_2 :_R N)}$.
Let $P, Q \in Coass_R(N)$. Then there exist completely irreducible submodules $L_1$ and $L_2$ of $M$ such that $P=(L_1:_RN)$ and $Q=(L_2:_RN)$. Thus, $P=\sqrt{(L_1:_RN)}$ and $Q=\sqrt{(L_2:_RN)}$. Hence, either $P \subseteq Q$  or
$Q \subseteq P$ and this completes the proof.

(b)
 The proof is similar to that of part (a).
\end{proof}

In \cite[2.10]{pb12}, it is shown that, if $R$ be a Noetherian ring, $M$ a finitely generated multiplication $R$-module, $N$ a proper submodule of $M$ such that $Ass_R(M/N)$ is a totally ordered set, and $(N :_R M)$ is a 2-absorbing ideal of $R$, then $N$ is a 2-absorbing submodule of $M$. In the following theorem we see that some of this conditions are redundant.
\begin{thm}\label{t11.11}
Let $N$ be a submodule of a multiplication $R$-module $M$ such that $(N :_R M)$ is a 2-absorbing ideal of $R$. Then $N$ is a 2-absorbing submodule of $M$.
\end{thm}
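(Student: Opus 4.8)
The plan is to use the multiplication hypothesis to convert the membership relation $abm\in N$ into a containment of ideals, and then to invoke Badawi's ideal-theoretic characterization of $2$-absorbing ideals recalled in the introduction (a proper ideal $I$ of $R$ is $2$-absorbing if and only if $I_1I_2I_3\subseteq I$ implies $I_1I_2\subseteq I$ or $I_1I_3\subseteq I$ or $I_2I_3\subseteq I$). First note that $N$ is proper: since $(N:_RM)$ is a $2$-absorbing ideal it is in particular a proper ideal, so $1\notin(N:_RM)$ and hence $M\not\subseteq N$.

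Now let $a,b\in R$ and $m\in M$ with $abm\in N$; we must show $am\in N$ or $bm\in N$ or $ab\in(N:_RM)$. Since $M$ is a multiplication module, the cyclic submodule $Rm$ can be written as $Rm=JM$ for some ideal $J$ of $R$. From $abm\in N$ we get $ab\,(Rm)\subseteq N$, that is, $(abJ)M\subseteq N$, which is equivalent to $abJ\subseteq(N:_RM)$. Viewing this as $(Ra)(Rb)J\subseteq(N:_RM)$ and applying the characterization of $2$-absorbing ideals to the ideal $(N:_RM)$, we conclude that $(Ra)(Rb)\subseteq(N:_RM)$, or $(Ra)J\subseteq(N:_RM)$, or $(Rb)J\subseteq(N:_RM)$.

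It remains to unwind each case. If $(Ra)(Rb)\subseteq(N:_RM)$ then $ab\in(N:_RM)$ and we are done. If $aJ\subseteq(N:_RM)$ then $aJM\subseteq N$, i.e. $a(Rm)\subseteq N$, so in particular $am\in N$; the case $bJ\subseteq(N:_RM)$ is symmetric and gives $bm\in N$. I do not anticipate a genuine obstacle here: the only points needing a little care are the standard identity $Rm=JM$ available because $M$ is a multiplication module, and the routine check that $abJ$ is an honest ideal so that $(abJ)M\subseteq N$ if and only if $abJ\subseteq(N:_RM)$; everything else is a direct appeal to the quoted result on $2$-absorbing ideals.
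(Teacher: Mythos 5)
Your proof is correct and follows essentially the same route as the paper's: write $Rm=IM$ using the multiplication hypothesis, convert $abm\in N$ into $abI\subseteq(N:_RM)$, apply Badawi's ideal-theoretic characterization of $2$-absorbing ideals, and translate each of the three cases back to $am\in N$, $bm\in N$, or $ab\in(N:_RM)$. The only difference is cosmetic — you make explicit the properness of $N$ and the equivalence $(abI)M\subseteq N\iff abI\subseteq(N:_RM)$, which the paper leaves implicit.
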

\begin{proof}
As $(N :_R M)\not = R$, $N \not = M$. Let $a, b \in R$, $m \in M$,  and $abm \in N$. Since $M$ is a multiplication $R$-module, there exists an ideal $I$ of $R$ such that $Rm=IM$. Thus $abIM \subseteq N$. Hence, $abI \subseteq (N:_RM)$. Now by assumption, $ab\in  (N:_RM)$ or $aI \subseteq (N:_RM)$ or $bI \subseteq (N:_RM)$. Therefore, $ab\in  (N:_RM)$ or $aIM \subseteq N$ or $bIM\subseteq N$. Thus  $ab\in  (N:_RM)$ or $am \in N$ or $bm \in N$.
\end{proof}

An $R$-module $M$ is said to be a \emph{comultiplication module} if for every submodule $N$ of $M$ there exists an ideal $I$ of $R$ such that $N=(0:_MI)$, equivalently, for each submodule $N$ of $M$, we have $N=(0:_MAnn_R(N))$ \cite{AF07}.
\begin{thm}\label{t1.11}
Let $N$ be a submodule of a comultiplication $R$-module $M$. Then we have the following.
\begin{itemize}
  \item [(a)] If $Ann_R(N)$ is a 2-absorbing ideal of $R$, then $N$ is a strongly 2-absorbing second submodule of $M$. In particular, $N$ is a 2-absorbing second submodule of $M$.
  \item [(b)] If $M$ is a cocyclic module and $N$ is a 2-absorbing second submodule of $M$, then $N$ is a strongly 2-absorbing second submodule of $M$.
\end{itemize}
\end{thm}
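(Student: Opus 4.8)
The plan is to exploit the comultiplication hypothesis to convert statements about submodules of $M$ into statements about annihilator ideals, where the $2$-absorbing hypothesis lives. Recall that in a comultiplication module every submodule $K$ satisfies $K=(0:_M\mathrm{Ann}_R(K))$; in particular, for completely irreducible submodules $L$ we may work with $(0:_ML)$-type descriptions, and multiplication by $a$ translates to passing from $\mathrm{Ann}_R(N)$ to $(\mathrm{Ann}_R(N):_Ra)$.

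For part (a), I would use the characterization in Theorem \ref{t1.5}(c): it suffices to show that for all $a,b\in R$ we have $abN=aN$ or $abN=bN$ or $abN=0$. Suppose $abN\neq 0$. Since $M$ is a comultiplication module, $abN=(0:_M\mathrm{Ann}_R(abN))$, and similarly for $aN$, $bN$; so it is enough to compare the annihilators. One computes $\mathrm{Ann}_R(abN)=(\mathrm{Ann}_R(N):_Rab)$, $\mathrm{Ann}_R(aN)=(\mathrm{Ann}_R(N):_Ra)$, and $\mathrm{Ann}_R(bN)=(\mathrm{Ann}_R(N):_Rb)$. Now take any $c\in(\mathrm{Ann}_R(N):_Rab)$, so $abc\in\mathrm{Ann}_R(N)$; because $\mathrm{Ann}_R(N)$ is a $2$-absorbing ideal, $ab\in\mathrm{Ann}_R(N)$ (impossible since $abN\neq 0$) or $ac\in\mathrm{Ann}_R(N)$ or $bc\in\mathrm{Ann}_R(N)$, i.e.\ $c\in(\mathrm{Ann}_R(N):_Ra)$ or $c\in(\mathrm{Ann}_R(N):_Rb)$. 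This shows $(\mathrm{Ann}_R(N):_Rab)=(\mathrm{Ann}_R(N):_Ra)\cup(\mathrm{Ann}_R(N):_Rb)$, and since an ideal that is a union of two ideals equals one of them, we get $abN=aN$ or $abN=bN$. The ``in particular'' clause is immediate from Example \ref{e2.2}.

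For part (b), assume $M$ is cocyclic and $N$ is $2$-absorbing second; I want to promote this to strongly $2$-absorbing second. Because $M$ is cocyclic, the zero submodule is completely irreducible, so by Proposition \ref{c11.6}(b) $\mathrm{Ann}_R(N)$ is a $2$-absorbing ideal of $R$. But $M$ cocyclic in particular forces $M$ to be a comultiplication module (a cocyclic module has simple essential socle and is easily checked to be comultiplication, or one cites the relevant fact), so part (a) applies and yields that $N$ is strongly $2$-absorbing second. The main obstacle I anticipate is the bookkeeping around the identities $\mathrm{Ann}_R(aN)=(\mathrm{Ann}_R(N):_Ra)$ and the reduction $abN=(0:_M\mathrm{Ann}_R(abN))$ — these hold in a comultiplication module but should be stated carefully — together with verifying cleanly that ``cocyclic implies comultiplication'' so that part (a) is genuinely available in the setting of part (b); once those are in place, the $2$-absorbing arithmetic is routine.
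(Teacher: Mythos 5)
Your proof of part (a) is correct but takes a different route from the paper's. The paper verifies condition (b) of Theorem \ref{t1.5} directly: from $abN\subseteq K$ it gets $\mathrm{Ann}_R(K)\,abN=0$, applies the ideal-theoretic form of the 2-absorbing property to conclude $\mathrm{Ann}_R(K)aN=0$ or $\mathrm{Ann}_R(K)bN=0$ or $abN=0$, and then uses comultiplication to turn $\mathrm{Ann}_R(K)\subseteq \mathrm{Ann}_R(aN)$ into $aN\subseteq K$. You instead verify condition (c) of Theorem \ref{t1.5}, computing $\mathrm{Ann}_R(abN)=(\mathrm{Ann}_R(N):_Rab)$ and showing this colon ideal is the union of $(\mathrm{Ann}_R(N):_Ra)$ and $(\mathrm{Ann}_R(N):_Rb)$, hence equal to one of them by the subgroup-union argument, and then applying $abN=(0:_M\mathrm{Ann}_R(abN))$. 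Both arguments hinge on the same comultiplication identity; yours is element-free on the module side and arguably cleaner since it needs only the elementwise 2-absorbing property rather than its ideal version, while the paper's gets condition (b) of Theorem \ref{t1.5} (the arbitrary-submodule formulation) directly. One small point you should make explicit: a 2-absorbing ideal is proper by definition, so $\mathrm{Ann}_R(N)\neq R$ and hence $N\neq 0$, which condition (c) requires.

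In part (b) your argument matches the paper's (Proposition \ref{c11.6}(b) gives that $\mathrm{Ann}_R(N)$ is 2-absorbing, then apply part (a)), but the detour through ``cocyclic implies comultiplication'' is both unnecessary and false in general: the standing hypothesis of the theorem already makes $M$ a comultiplication module in part (b), and a cocyclic module need not be comultiplication (e.g.\ the injective hull of the residue field of a non-complete Noetherian local ring is cocyclic but its submodules correspond to ideals of the completion, not all of which are cut out by ideals of $R$). Simply delete that claim and invoke the theorem's hypothesis; the rest stands.
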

\begin{proof}
(a) Let $a, b \in R$, $K$ be a submodule of $M$, and $abN\subseteq K$. Then we have $Ann_R(K)abN=0$. So by assumption, $Ann_R(K)aN=0$ or $Ann_R(K)bN=0$ or $abN=0$. If $abN=0$, we are done. If $Ann_R(K)aN=0$ or $Ann_R(K)bN=0$, then $Ann_R(K) \subseteq Ann_R(aN)$ or $Ann_R(K) \subseteq Ann_R(bN)$. Hence, $aN \subseteq K$ or $bN \subseteq K$ since $M$ is a comultiplication $R$-module.

(b) By Proposition \ref{c11.6}, $Ann_R(N)$ is a 2-absorbing ideal of $R$. Thus the result follows from part (a).
\end{proof}

The following example shows that Theorem \ref{t1.11} (a) is not satisfied in general.

\begin{ex}\label{e1.11}
 By \cite[3.9]{AF07}, the $\Bbb Z$-module $\Bbb Z$ is not a comultiplication $\Bbb Z$-module. The submodule $N=p\Bbb Z$ of $\Bbb Z$, where $p$ is a prime number, is not strongly 2-absorbing second submodule. But $Ann_{\Bbb Z}(p\Bbb Z)=0$ is a 2-absorbing ideal of $R$.
\end{ex}

For a submodule $N$ of an $R$-module $M$ the the \emph{second radical} (or \emph{\emph{second socle}}) of $N$ is defined  as the sum of all second submodules of $M$ contained in $N$ and it is denoted by $sec(N)$ (or $soc(N)$). In case $N$ does not contain any second submodule, the second radical of $N$ is defined to be $(0)$ (see \cite{CAS13} and \cite{AF11}).

\begin{thm}\label{p111.11}
Let $M$ be a finitely generated comultiplication $R$-module. If $N$ is a strongly 2-absorbing second submodule of $M$, then $sec(N)$ s a strongly 2-absorbing second submodule of $M$.
\end{thm}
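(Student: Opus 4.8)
The plan is to compute $sec(N)$ explicitly from $Ann_R(N)$ and then check the criterion of Theorem~\ref{t1.5}(c) by a short case analysis. First, by Proposition~\ref{p1.6}(a) the ideal $Ann_R(N)$ is $2$-absorbing, so Remark~\ref{r1.7} tells us that $\sqrt{Ann_R(N)}$ is either a prime ideal $P$ or an intersection $P\cap Q$ of two distinct primes minimal over $Ann_R(N)$, and in both cases $\bigl(\sqrt{Ann_R(N)}\bigr)^2\subseteq Ann_R(N)$. Since $M$ is a comultiplication module, I would then invoke the standard facts about such modules \cite{AF07}: $N=(0:_M Ann_R(N))$, $(0:_M(I\cap J))=(0:_M I)+(0:_M J)$ for ideals $I,J$, $a(0:_M I)=(0:_M(I:_R a))$ for $a\in R$, and a nonzero submodule is second precisely when its annihilator is prime. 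From these one gets $sec(N)=(0:_M\sqrt{Ann_R(N)})$: every second $S\subseteq N$ has prime annihilator containing $\sqrt{Ann_R(N)}$, so $S=(0:_M Ann_R(S))\subseteq(0:_M\sqrt{Ann_R(N)})$, giving $\subseteq$; conversely $(0:_M\sqrt{Ann_R(N)})$ is $(0:_M P)$ or $(0:_M P)+(0:_M Q)$, a finite sum of submodules each of which is zero or a second submodule contained in $(0:_M Ann_R(N))=N$, giving $\supseteq$. Finally $sec(N)\neq 0$: since $N=(0:_M Ann_R(N))\neq 0$ and $\bigl(\sqrt{Ann_R(N)}\bigr)^2\subseteq Ann_R(N)$, if $(0:_M\sqrt{Ann_R(N)})$ were $0$ then $(0:_M(\sqrt{Ann_R(N)})^2)$ would be $0$ as well, forcing $N=0$; here $M$ finitely generated also guarantees $sec(N)\neq 0$ directly, cf.\ \cite{AF07}.

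With $J:=\sqrt{Ann_R(N)}$ one has $sec(N)=(0:_M J)$ and, for $a\in R$, $a\,sec(N)=(0:_M(J:_R a))$. If $J=P$ is prime, then $(J:_R a)$ is $R$ when $a\in P$ and $P$ when $a\notin P$, so $a\,sec(N)$ is $0$ or $sec(N)$; hence multiplication by $a$ on the nonzero module $sec(N)$ is zero or surjective, i.e.\ $sec(N)$ is a second submodule of $M$ with $Ann_R(sec(N))=P$. Any such submodule meets the criterion of Theorem~\ref{t1.5}(c): for $a,b\in R$, if $ab\in P$ then $ab\,sec(N)=0$, and if $ab\notin P$ then $a\notin P$ and $b\notin P$, so $a\,sec(N)=b\,sec(N)=ab\,sec(N)=sec(N)$. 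Thus $sec(N)$ is strongly $2$-absorbing second in this case.

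If $J=P\cap Q$, then for $a\in R$ we have $(J:_R a)=(P:_R a)\cap(Q:_R a)$, which is $P\cap Q$, $Q$, $P$, or $R$ according to whether $a$ lies outside both $P$ and $Q$, in $P$ only, in $Q$ only, or in both; correspondingly $a\,sec(N)$ is $sec(N)$, $(0:_M Q)$, $(0:_M P)$, or $0$. Using that $P$ and $Q$ are prime, I would check the possibilities for the product: if $ab\in P\cap Q$ then $ab\,sec(N)=0$; if $ab\notin P$ and $ab\notin Q$ then $a,b\notin P\cup Q$ and $ab\,sec(N)=sec(N)=a\,sec(N)$; if $ab\in P$ and $ab\notin Q$ then $a\in P$ or $b\in P$ while $a,b\notin Q$, so whichever of $a,b$ lies in $P$ has $a\,sec(N)=(0:_M Q)=ab\,sec(N)$ (respectively $b\,sec(N)=ab\,sec(N)$); the case $ab\in Q$, $ab\notin P$ is symmetric. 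In all cases $ab\,sec(N)\in\{a\,sec(N),\,b\,sec(N),\,0\}$, so by Theorem~\ref{t1.5} $sec(N)$ is a strongly $2$-absorbing second submodule of $M$.

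The substantive content is entirely in the first paragraph: once $sec(N)$ is identified with $(0:_M J)$ for the radical ideal $J=\sqrt{Ann_R(N)}$ whose shape is pinned down by Remark~\ref{r1.7}, the verification reduces to the elementary case-check above. I expect the main obstacle to be assembling this description of $sec(N)$ in a comultiplication module --- in particular keeping track of which of $(0:_M P)$, $(0:_M Q)$ may vanish and confirming $sec(N)\neq 0$ --- rather than anything involving the $2$-absorbing condition itself, which enters only through Proposition~\ref{p1.6}(a) and Remark~\ref{r1.7}.
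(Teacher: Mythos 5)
Your overall strategy is sound and it is genuinely different from the paper's. The paper's proof is a three-step citation chain: Proposition \ref{p1.6}(a) gives that $Ann_R(N)$ is $2$-absorbing; \cite[2.1]{Ba07} gives that $\sqrt{Ann_R(N)}$ is then $2$-absorbing; \cite[2.12]{AF25} gives $Ann_R(sec(N))=\sqrt{Ann_R(N)}$ for a finitely generated comultiplication module; and Theorem \ref{t1.11}(a) converts ``$2$-absorbing annihilator in a comultiplication module'' directly into ``strongly $2$-absorbing second.'' You replace the last two steps: instead of citing \cite[2.12]{AF25} you re-derive its comultiplication-module form $sec(N)=(0:_M\sqrt{Ann_R(N)})$, and instead of invoking Theorem \ref{t1.11}(a) you use Badawi's structure theorem (Remark \ref{r1.7}) to split into the cases $\sqrt{Ann_R(N)}=P$ and $\sqrt{Ann_R(N)}=P\cap Q$ and verify criterion (c) of Theorem \ref{t1.5} by hand. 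Your case analysis in the second and third paragraphs is correct, and as a byproduct it shows that $sec(N)$ is second or a sum of two second submodules, which is more information than the paper's argument extracts.

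The weak point is exactly where you predict it: the first paragraph. The ``standard facts'' you attribute to \cite{AF07} --- that $(0:_M(I\cap J))=(0:_M I)+(0:_M J)$, that $a(0:_M I)=(0:_M(I:_Ra))$, and that a nonzero $(0:_M P)$ with $P$ prime is second --- are not formal consequences of the definition of a comultiplication module; only one inclusion of each is automatic, and the third can fail without extra hypotheses (e.g.\ $(0:_M 0)=\Bbb Z/p^2\Bbb Z$ is not second although $0$ is prime). These statements are precisely where the finitely generated hypothesis must enter, and together they are essentially equivalent to the result $Ann_R(sec(N))=\sqrt{Ann_R(N)}$ that the paper outsources to \cite[2.12]{AF25}. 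So your proof is acceptable if you either cite that result (at which point the paper's shorter route via Theorem \ref{t1.11}(a) becomes available and your case analysis is no longer needed) or supply proofs of those three identities for finitely generated comultiplication modules; as written, the substantive content of the theorem is asserted rather than proved.
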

\begin{proof}
Let  $N$ be a strongly 2-absorbing second submodule of $M$. By Proposition \ref{p1.6} (a), $Ann_R(N)$ is a 2-absorbing ideal of $R$. Thus by \cite[2.1]{Ba07}, $\sqrt{Ann_R(N)}$ is a 2-absorbing ideal of $R$. By \cite[2.12]{AF25}, $Ann_R(sec(N))=\sqrt{Ann_R(N)}$.  Therefore, $Ann_R(sec(N))$ is a 2-absorbing ideal of $R$. Now the result follows from Theorem \ref{t1.11} (a).
\end{proof}

\begin{lem}\label{l2.14}
Let $f : M \rightarrow \acute{M}$ be a monomorphism of R-modules.  If $\acute{L}$ is a completely irreducible submodule of $f(M)$, then $f^{-1}(\acute{L})$ is a completely irreducible submodule of $M$.
\end{lem}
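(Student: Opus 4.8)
The plan is to exploit the fact that a monomorphism $f$ restricts to an isomorphism $\bar f\colon M \to f(M)$, so that submodules of $M$ and submodules of $f(M)$ correspond bijectively in a way that preserves arbitrary intersections, and then to transfer the defining property of $\acute{L}$ back along this correspondence. Since the statement is really just the observation that ``completely irreducible'' is an isomorphism-invariant notion, together with the fact that $\acute{L}$ already lives inside $f(M)$, the argument should be short.

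First I would check that $f^{-1}(\acute{L})$ is a \emph{proper} submodule of $M$: as $\acute{L}$ is completely irreducible in $f(M)$ it is proper there, so there is $x \in f(M)\setminus\acute{L}$, and writing $x = f(m)$ gives $m \in M \setminus f^{-1}(\acute{L})$. Next, I would take an arbitrary presentation $f^{-1}(\acute{L}) = \bigcap_{i\in I} N_i$ with the $N_i$ submodules of $M$, apply $f$, and use injectivity in two places: $f\bigl(\bigcap_{i\in I} N_i\bigr) = \bigcap_{i\in I} f(N_i)$, and $f\bigl(f^{-1}(\acute{L})\bigr) = \acute{L}\cap f(M) = \acute{L}$ because $\acute{L}\subseteq f(M)$. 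This yields $\acute{L} = \bigcap_{i\in I} f(N_i)$, an intersection of submodules of $f(M)$, so complete irreducibility of $\acute{L}$ in $f(M)$ gives $\acute{L} = f(N_j)$ for some $j\in I$. Applying $f^{-1}$ and using injectivity once more ($f^{-1}(f(N_j)) = N_j$) gives $f^{-1}(\acute{L}) = N_j$, which is exactly what the definition of completely irreducible submodule of $M$ requires.

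Alternatively, and perhaps more cleanly, I would invoke the characterization recalled just before the lemma: $\bar f$ induces an $R$-module isomorphism $M/f^{-1}(\acute{L}) \cong f(M)/\acute{L}$, and the right-hand side is cocyclic because $\acute{L}$ is completely irreducible in $f(M)$; hence $M/f^{-1}(\acute{L})$ is cocyclic, i.e.\ $f^{-1}(\acute{L})$ is completely irreducible in $M$. I expect no genuine obstacle here; the only points needing care are the set-theoretic identities $f\bigl(\bigcap_i N_i\bigr) = \bigcap_i f(N_i)$ and $f^{-1}(f(N_j)) = N_j$, both of which follow at once from the injectivity of $f$, and the verification that $\acute{L} \subseteq f(M)$ so that $f\bigl(f^{-1}(\acute{L})\bigr)$ recovers $\acute{L}$ rather than a proper part of it.
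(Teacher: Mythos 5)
Your argument is correct; the paper itself dismisses this lemma as "straightforward," and your proof simply supplies the details of that straightforward argument (injectivity makes $f$ commute with intersections and makes $f^{-1}f$ the identity on submodules of $M$, while $\acute{L}\subseteq f(M)$ ensures $ff^{-1}(\acute{L})=\acute{L}$). Both your direct verification and your alternative via the cocyclic quotient characterization are valid.
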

\begin{proof}
This is strighatforward.
\end{proof}

\begin{lem}\label{l2.15}
Let $f : M \rightarrow \acute{M}$ be a monomorphism of R-modules.
If $L$ is a completely irreducible submodule of $M$, then $f(L)$ is a completely irreducible submodule of $f(M)$.
\end{lem}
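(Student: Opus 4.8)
The plan is to show that $f(L)$ is a completely irreducible submodule of $f(M)$ by translating the problem into one about cocyclic modules, using the characterization recalled earlier in the excerpt: a submodule $L'$ of a module $M'$ is completely irreducible if and only if $M'/L'$ is cocyclic. So first I would note that, since $f$ is a monomorphism, $f$ restricts to an isomorphism $M \xrightarrow{\sim} f(M)$ carrying $L$ onto $f(L)$; hence there is an induced isomorphism of $R$-modules $M/L \cong f(M)/f(L)$. Because $L$ is completely irreducible in $M$, the quotient $M/L$ is cocyclic, and therefore so is $f(M)/f(L)$, which gives that $f(L)$ is completely irreducible in $f(M)$.

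Alternatively, one can argue directly from the definition without invoking the cocyclic characterization. Suppose $f(L) = \bigcap_{i \in I} K_i$, where each $K_i$ is a submodule of $f(M)$. Applying $f^{-1}$ (and using that $f$ is injective, so $f^{-1}(f(L)) = L$ and $f^{-1}$ commutes with arbitrary intersections), we get $L = \bigcap_{i \in I} f^{-1}(K_i)$, an intersection of submodules of $M$. Since $L$ is completely irreducible in $M$, there is some $i \in I$ with $L = f^{-1}(K_i)$; applying $f$ and using $f(f^{-1}(K_i)) = K_i \cap f(M) = K_i$ (as $K_i \subseteq f(M)$), we conclude $f(L) = K_i$, as required.

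There is essentially no obstacle here: the result is exactly the image-side companion of Lemma \ref{l2.14}, and both are ``straightforward'' in the sense that they only require the elementary bookkeeping that a monomorphism induces an isomorphism onto its image together with one of the two standard facts about completely irreducible submodules (the lattice-theoretic definition or the cocyclic-quotient characterization), both of which are available in the excerpt. The only mild care needed is to keep track of where intersections and preimages live — in particular, that for $K_i \subseteq f(M)$ one has $f(f^{-1}(K_i)) = K_i$, which fails without the containment in $f(M)$ — but this is routine. I would present the short proof via $M/L \cong f(M)/f(L)$ and the cocyclic characterization, since it is the cleanest.

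\begin{proof}
Since $f$ is a monomorphism, it induces an isomorphism of $R$-modules $\bar{f} : M \to f(M)$, and $\bar{f}(L) = f(L)$. Hence $\bar{f}$ induces an isomorphism $M/L \cong f(M)/f(L)$. As $L$ is a completely irreducible submodule of $M$, the module $M/L$ is cocyclic, and therefore $f(M)/f(L)$ is cocyclic as well. Consequently $f(L)$ is a completely irreducible submodule of $f(M)$.
\end{proof}
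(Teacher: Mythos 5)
Your proposal is correct, and in fact it contains the paper's own argument verbatim in substance: the ``alternative'' direct argument you sketch (pull back a presentation $f(L)=\bigcap_i K_i$ along $f^{-1}$, use injectivity to get $L=\bigcap_i f^{-1}(K_i)$, invoke complete irreducibility of $L$, then push forward using $f(f^{-1}(K_i))=K_i\cap f(M)=K_i$) is exactly the proof given in the paper. The proof you actually chose to present instead routes through the cocyclic characterization: $f$ induces an isomorphism $M/L\cong f(M)/f(L)$, and cocyclicity of the quotient is an isomorphism invariant. Both are valid; the cocyclic version is arguably cleaner since it reduces everything to the single observation that a monomorphism is an isomorphism onto its image, while the paper's lattice-theoretic version stays entirely inside the definition and makes the one genuinely delicate point explicit, namely that $f(f^{-1}(K_i))=K_i$ only because $K_i\subseteq f(M)$ --- which is precisely why the statement is about $f(M)$ rather than $\acute{M}$. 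You correctly flag this point yourself, so there is no gap either way.
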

\begin{proof}
Let $\{\acute{N}_i\}_{i \in I}$ be a family of submodules of $f(M)$ such that $f(L)= \cap_{i \in I}\acute{N}_i$. Then $L=f^{-1}f(L)=f^{-1}(\cap_{i \in I}\acute{N}_i)=\cap_{i \in I}f^{-1}(\acute{N}_i)$. This implies that there exists $i \in I$ such that $L=f^{-1}(\acute{N}_i)$ since $L$ is a completely irreducible submodule of $M$. Therefore, $f(L) =ff^{-1}(\acute{N}_i)=f(M) \cap \acute{N_i}=\acute{N}_i$, as requested.
\end{proof}

\begin{thm}\label{t2.16}
Let $f : M \rightarrow \acute{M}$ be a monomorphism of R-modules. Then we have the following.
\begin{itemize}
  \item [(a)] If $N$ is a strongly 2-absorbing second submodule of $M$, then $f(N)$ is a 2-absorbing second submodule of $\acute{M}$.
  \item [(b)] If $N$ is a 2-absorbing second submodule of $M$,  then $f(N)$ is a 2-absorbing second submodule of $f(M)$.
  \item [(c)] If $\acute{N}$ is a strongly 2-absorbing second submodule of $\acute{M}$ and $\acute{N} \subseteq f(M)$, then $f^{-1}(\acute{N})$ is a 2-absorbing second submodule of $M$.
  \item [(d)] If $\acute{N}$ is a 2-absorbing second submodule of $f(M)$, then $f^{-1}(\acute{N})$ is a 2-absorbing second submodule of $M$.
\end{itemize}
\end{thm}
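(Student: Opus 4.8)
The four assertions split into two pairs according to whether the ambient module is preserved. The plan is to prove (b) and (d) directly from Definition \ref{d2.1}, transporting completely irreducible submodules along $f$ by Lemmas \ref{l2.14} and \ref{l2.15}, and to prove (a) and (c) via the intrinsic characterization of strongly $2$-absorbing second submodules given in Theorem \ref{t1.5}(c), which makes no reference to the ambient module. Throughout I will use the routine identities for a monomorphism: $f(aN)=af(N)$, $f(abN)=abf(N)$, $f^{-1}f(L)=L$, and $ff^{-1}(\acute{L})=\acute{L}\cap f(M)$; in particular $f(X)\subseteq f(Y)$ forces $X\subseteq Y$, and $f$ carries non-zero submodules to non-zero submodules.

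For (b): since $f$ is injective and $N\neq 0$ we have $f(N)\neq 0$. Given $a,b\in R$ and a completely irreducible submodule $\acute{L}$ of $f(M)$ with $abf(N)\subseteq\acute{L}$, I would pull back: from $f(abN)=abf(N)\subseteq\acute{L}$ we get $abN\subseteq f^{-1}(\acute{L})$, and $f^{-1}(\acute{L})$ is completely irreducible in $M$ by Lemma \ref{l2.14}. Since $N$ is $2$-absorbing second, $aN\subseteq f^{-1}(\acute{L})$ or $bN\subseteq f^{-1}(\acute{L})$ or $abN=0$; applying $f$ and using $ff^{-1}(\acute{L})=\acute{L}$ (as $\acute{L}\subseteq f(M)$) turns these into $af(N)\subseteq\acute{L}$, $bf(N)\subseteq\acute{L}$, or $abf(N)=0$, which is exactly what is required. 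For (d): set $N_0=f^{-1}(\acute{N})$; then $N_0\cong\acute{N}\neq 0$ and $f(N_0)=\acute{N}$ because $\acute{N}\subseteq f(M)$. Given $a,b\in R$ and a completely irreducible submodule $L$ of $M$ with $abN_0\subseteq L$, apply $f$: $f(L)$ is completely irreducible in $f(M)$ by Lemma \ref{l2.15} and $ab\acute{N}=f(abN_0)\subseteq f(L)$. Since $\acute{N}$ is $2$-absorbing second in $f(M)$, one of $a\acute{N}\subseteq f(L)$, $b\acute{N}\subseteq f(L)$, $ab\acute{N}=0$ holds; using $f(aN_0)=a\acute{N}$, etc., together with the injectivity of $f$, these pull back to $aN_0\subseteq L$, $bN_0\subseteq L$, or $abN_0=0$.

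For (a) and (c) the ambient module is (a priori) the possibly larger $\acute{M}$, so completely irreducible submodules of $\acute{M}$ need not restrict to completely irreducible submodules of $f(M)$, and the argument above breaks; this is why I would instead use Theorem \ref{t1.5}(c). For (a): $N$ strongly $2$-absorbing second in $M$ means $N\neq 0$ and, for all $a,b\in R$, $abN$ equals $aN$, $bN$, or $0$. Applying $f$ (and using $f(N)\neq 0$) gives that $abf(N)$ equals $af(N)$, $bf(N)$, or $0$ for all $a,b\in R$, so by Theorem \ref{t1.5} the submodule $f(N)$ is strongly $2$-absorbing second in $\acute{M}$, hence in particular $2$-absorbing second by Example \ref{e2.2}. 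For (c): since the condition of Theorem \ref{t1.5}(c) is intrinsic to the submodule, $\acute{N}$ is strongly $2$-absorbing second in $f(M)$ as well, hence $2$-absorbing second in $f(M)$ by Example \ref{e2.2}; now part (d) applies and shows $f^{-1}(\acute{N})$ is $2$-absorbing second in $M$. (Alternatively one can argue (c) directly, applying $f^{-1}$ in the previous paragraph's computation, using $f(f^{-1}(\acute{N}))=\acute{N}$ and injectivity.)

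I do not expect a serious obstacle here: the content is the correct matching of hypotheses to conclusions. The one place requiring care is the bookkeeping with images and preimages — most notably the facts $f(f^{-1}(\acute{L}))=\acute{L}$ only when $\acute{L}\subseteq f(M)$, and $f^{-1}(\acute{N})\neq 0$ — and recognizing that the drop in the ambient module from $\acute{M}$ to $f(M)$ in parts (b) and (d) is genuinely needed (otherwise Lemma \ref{l2.14} is unavailable), whereas parts (a) and (c) must route through the intrinsic characterization precisely because they allow the larger ambient module $\acute{M}$.
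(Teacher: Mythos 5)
Your proposal is correct, and for parts (b) and (d) it is essentially the paper's own argument: transport the completely irreducible submodule along $f$ using Lemmas \ref{l2.14} and \ref{l2.15} and apply the definition. For parts (a) and (c) you take a mildly different route. The paper does not invoke the intrinsic characterization of Theorem \ref{t1.5}(c); instead it exploits Theorem \ref{t1.5}(b), namely that a strongly 2-absorbing second submodule satisfies the absorbing condition with respect to \emph{arbitrary} submodules $K$, and applies this directly to $f^{-1}(\acute{L})$ in (a) and to $f(L)$ in (c) --- precisely the submodules that fail to be completely irreducible when the ambient module changes, which is the obstruction you correctly identified. Your detour through the intrinsic condition ``$abN=aN$ or $bN$ or $0$'' is equally valid and in fact buys a slightly stronger conclusion in (a): $f(N)$ is \emph{strongly} 2-absorbing second in $\acute{M}$, not merely 2-absorbing second; similarly your reduction of (c) to (d) is clean and correct. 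Both approaches ultimately rest on Theorem \ref{t1.5}, and your bookkeeping with $ff^{-1}(\acute{L})=\acute{L}\cap f(M)$ and the nonvanishing of $f^{-1}(\acute{N})$ matches the paper's.
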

\begin{proof}
(a) Since $N \not =0$ and $f$ is a monomorphism, we have $f(N) \not =0$. Let $a, b \in R$, $\acute{L}$ be a completely irreducible submodule of $\acute{M}$, and $abf(N)\subseteq \acute{L}$. Then $abN \subseteq f^{-1}(\acute{L})$. As $N$ is strongly 2-absorbing second submodule, $aN \subseteq f^{-1}(\acute{L})$ or $bN \subseteq f^{-1}(\acute{L})$ or $abN=0$. Therefore,
 $$
 af(N) \subseteq f(f^{-1}(\acute{L}))=f(M) \cap \acute{L} \subseteq \acute{L}
 $$
 or
 $$
 bf(N) \subseteq f(f^{-1}(\acute{L}))=f(M) \cap \acute{L} \subseteq \acute{L}
 $$
 or $abf(N)=0$, as needed.

 (b) This is similar to the part (a).

 (c) If $f^{-1}(\acute{N})=0$, then $f(M) \cap \acute{N}=ff^{-1}(\acute{N})=f(0)=0$. Thus $\acute{N}=0$, a contradiction. Therefore, $f^{-1}(\acute{N})\not=0$. Now let $a, b \in R$, $L$ be a completely irreducible submodule of $M$, and $abf^{-1}(\acute{N})\subseteq L$. Then
  $$
  ab\acute{N}=ab(f(M) \cap \acute{N})=abff^{-1}(\acute{N})\subseteq f(L).
  $$
 As $\acute{N}$ is strongly 2-absorbing second submodule, $a\acute{N} \subseteq f(L)$ or $b\acute{N} \subseteq f(L)$ or $ab\acute{N}=0$. Hence $af^{-1}(\acute{N}) \subseteq f^{-1}f(L)=L$ or $bf^{-1}(\acute{N}) \subseteq f^{-1}f(L)=L$ or $abf^{-1}(\acute{N})=0$, as desired.

 (d) By using Lemma \ref{l2.15}, this is similar to the part (c).
\end{proof}

\begin{cor}\label{t2.17}
Let $M$ be an $R$-module and $N\subseteq K$ be two submodules of $M$. Then we have the following.
\begin{itemize}
  \item [(a)] If $N$ is a strongly 2-absorbing second submodule of $K$, then $N$ is a 2-absorbing second submodule of $M$.
  \item [(b)] If $N$ is a strongly 2-absorbing second submodule of $M$, then $N$ is a 2-absorbing second submodule of $K$.
\end{itemize}
\end{cor}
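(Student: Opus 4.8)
The plan is to deduce both statements from Theorem \ref{t2.16} by applying that theorem to the canonical inclusion map. Let $\iota\colon K \hookrightarrow M$ be the embedding; this is clearly a monomorphism of $R$-modules with $\iota(K)=K$, and since $N\subseteq K$ we have the trivial identities $\iota(N)=N$ and $\iota^{-1}(N)=N$. So the whole proof is really a matter of bookkeeping: identifying which of $M,K$ plays the role of the source and which plays the role of the target in Theorem \ref{t2.16}.

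For part (a), suppose $N$ is a strongly 2-absorbing second submodule of $K$. I would apply Theorem \ref{t2.16}(a) to $\iota\colon K\to M$, i.e. with the theorem's ``$M$'' taken to be $K$ and the theorem's ``$\acute M$'' taken to be $M$. Since $N$ is strongly 2-absorbing second in the source $K$, the conclusion is that $\iota(N)=N$ is a 2-absorbing second submodule of the target $M$, which is exactly the assertion. For part (b), suppose $N$ is a strongly 2-absorbing second submodule of $M$. Now I would apply Theorem \ref{t2.16}(c) to the same map $\iota\colon K\to M$, where the role of $\acute N$ is played by $N$: it is strongly 2-absorbing second in the target $M$, and it satisfies $\acute N=N\subseteq K=\iota(K)$, so the hypothesis $\acute N\subseteq f(M)$ of that part holds. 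Hence $\iota^{-1}(N)$ is a 2-absorbing second submodule of the source $K$, and $\iota^{-1}(N)=N$ gives the claim.

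I do not expect any genuine obstacle here; the only thing to watch is matching the variable names in Theorem \ref{t2.16} to the current setup and invoking $\iota(N)=N$, $\iota^{-1}(N)=N$, which hold precisely because $N\subseteq K$. For completeness one could instead argue directly: in (a), given $a,b\in R$ and a completely irreducible submodule $L$ of $M$ with $abN\subseteq L$, the submodule $L\cap K$ is either all of $K$ (in which case every containment is automatic) or a completely irreducible submodule of $K$ by Lemma \ref{l2.14} applied to $\iota$, and applying the strongly 2-absorbing second condition in $K$ to $L\cap K$ yields $aN\subseteq L$ or $bN\subseteq L$ or $ab\in Ann_R(N)$; part (b) is the symmetric argument using Lemma \ref{l2.15}. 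But routing everything through Theorem \ref{t2.16} keeps the proof short, so that is the route I would take.
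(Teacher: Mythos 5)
Your proof is correct and is essentially the paper's own argument: the paper likewise derives both parts from Theorem \ref{t2.16} via the natural monomorphism $K\rightarrow M$, and your identification of which parts of that theorem to invoke (part (a) for (a), part (c) for (b)) and the bookkeeping $\iota(N)=N=\iota^{-1}(N)$ is exactly what is needed.
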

\begin{proof}
This follows from Theorem \ref{t2.16} by using the natural monomorphism $K\rightarrow M$.
\end{proof}

A non-zero submodule $N$ of an $R$-module $M$ is said to be a \emph{weakly second submodule} of $M$ if $rsN\subseteq K$, where $r,s \in R$ and $K$ is a submodule of
$M$, implies either $rN\subseteq K$ or $sN\subseteq K$ \cite{AF101}.

\begin{prop}\label{p2.18}
Let $N$ ba a non-zero submodule of an $R$-module $M$. Then $N$ is a weakly second submodule of $M$ if and only if $N$ is a strongly 2-absorbing second submodule of $M$ and $Ann_R(N)$ is a prime ideal of $R$.
\end{prop}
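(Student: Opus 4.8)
The plan is to prove the two implications separately, using the element-wise definition of a weakly second submodule together with the equivalent forms of ``strongly 2-absorbing second'' recorded in Theorem \ref{t1.5}.

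For the forward implication, suppose $N$ is weakly second. To see that $N$ is strongly 2-absorbing second, I would take $a,b\in R$ and completely irreducible submodules $L_1,L_2$ of $M$ with $abN\subseteq L_1\cap L_2$, and apply the defining property of a weakly second submodule with $K=L_1\cap L_2$; this yields $aN\subseteq L_1\cap L_2$ or $bN\subseteq L_1\cap L_2$, which is in fact stronger than what Definition \ref{d2.2} requires. To see that $Ann_R(N)$ is prime, first note that it is a proper ideal because $N\neq 0$; then, given $a,b\in R$ with $ab\in Ann_R(N)$, we have $abN\subseteq(0)$, so taking $K=(0)$ in the weakly second condition gives $aN=(0)$ or $bN=(0)$, i.e. $a\in Ann_R(N)$ or $b\in Ann_R(N)$.

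For the converse, suppose $N$ is strongly 2-absorbing second and $Ann_R(N)$ is prime. Let $r,s\in R$ and let $K$ be a submodule of $M$ with $rsN\subseteq K$. Applying Theorem \ref{t1.5}$(b)$ to the principal ideals $I=Rr$ and $J=Rs$, so that $IJN=rsN\subseteq K$, gives $rN\subseteq K$ or $sN\subseteq K$ or $rs\in Ann_R(N)$. In the first two cases we are done; in the last case the primeness of $Ann_R(N)$ forces $r\in Ann_R(N)$ or $s\in Ann_R(N)$, whence $rN=(0)\subseteq K$ or $sN=(0)\subseteq K$. Thus $N$ is weakly second.

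There is essentially no hard step here: the only point requiring a little care is to pass through the ideal-theoretic characterisation of Theorem \ref{t1.5} applied to principal ideals, so that the alternative ``$rs\in Ann_R(N)$'' appears explicitly --- and this is exactly the alternative that the primeness hypothesis on $Ann_R(N)$ absorbs into the other two. In the other direction, the two defining features of ``weakly second'' split cleanly into the strongly 2-absorbing second property (obtained by taking $K=L_1\cap L_2$) and the primeness of the annihilator (obtained by taking $K=(0)$).
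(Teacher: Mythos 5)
Your proposal is correct and follows essentially the same route as the paper: the forward direction is the (near-trivial) specialization of the weakly second condition to $K=L_1\cap L_2$ and $K=(0)$ (the paper simply cites \cite[3.3]{AF101} for the primeness of $Ann_R(N)$, which you instead verify directly), and the converse uses the ideal-theoretic form of Theorem \ref{t1.5} to produce the alternative $rs\in Ann_R(N)$, which primeness then absorbs. No gaps.
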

\begin{proof}
Clearly, if  $N$ is a weakly second submodule of $M$, then $N$ is a strongly 2-absorbing second submodule of $M$ and by \cite[3.3]{AF101}, $Ann_R(N)$ is a prime ideal of $R$. For the converse, let $abN \subseteq H$ for some $a, b \in R$ and submodule $K$ of $M$ such that neither $aN \subseteq H$ nor $bN \subseteq H$. Then $ab \in Ann_R(N)$ and so either $a \in Ann_R(N)$ or $b \in Ann_R(N)$. This contradiction shows that $N$ is weakly second.
\end{proof}

The following example shows that the two concepts of strongly 2-absorbing second submodule and weakly second submodule are different in general.
\begin{ex}\label{e2.18}
Let $p$, $q$ be two prime numbers, $N=<1/p+\Bbb Z>$, and $K=<1/q+ \Bbb Z>$. Then $N\oplus K$ is not a weakly second submodule of the $\Bbb Z$-module $\Bbb Z_{p^\infty} \oplus \Bbb Z_{q^\infty}$. But $N\oplus K$ is a strongly 2-absorbing second submodule of the $\Bbb Z$-module $\Bbb Z_{p^\infty} \oplus \Bbb Z_{q^\infty}$.
\end{ex}

\begin{cor}\label{c2.18}
Let $N$ be a submodule of an $R$-module $M$. Then

\begin{picture}(10,65.75)(25,70)
\put(66.25,115.25){\makebox(270,0)[cc]{$N\\\ is\\\ second\\\ \Rightarrow
N\\\ is\\\ weakly\\\ second\\\ \Rightarrow N\\\ is\\\ strongly \\\ 2-absorbing\\\ second$}}
\put(64.25,95.25){\makebox(220,0)[cc]{$\Rightarrow N\\\ is\\\ 2-absorbing\\\ second$.}}
\end{picture}
\\
In general, none of the above implications is reversible.
\end{cor}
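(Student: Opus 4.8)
The plan is to string together the three implications displayed in the diagram, each of which is all but immediate from the relevant definitions, and then to produce three modules witnessing that no arrow reverses.

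For the implications: first, if $N$ is second and $rsN\subseteq K$ while $rN\not\subseteq K$, then $rN\neq 0$, so $rN=N$ because $N$ is second, and hence $sN=s(rN)=(rs)N\subseteq K$; together with $N\neq 0$ (built into the definition of second) this says $N$ is weakly second. Second, if $N$ is weakly second, then applying the defining property of a weakly second submodule with $K=L_1\cap L_2$ shows that $abN\subseteq L_1\cap L_2$ forces $aN\subseteq L_1\cap L_2$ or $bN\subseteq L_1\cap L_2$, which is stronger than Definition \ref{d2.2} demands; this is exactly the remark used at the start of the proof of Proposition \ref{p2.18}. Third, a strongly $2$-absorbing second submodule is $2$-absorbing second: take $L_1=L_2=L$ in Definition \ref{d2.2} and compare with Definition \ref{d2.1}, as already noted in Example \ref{e2.2}. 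Chaining these three gives every displayed implication.

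For irreversibility I would invoke the examples already in the paper together with one extra. The arrow ``strongly $2$-absorbing second $\Rightarrow$ $2$-absorbing second'' is strict by Example \ref{e2.2}, where $2\Bbb Z$ is a $2$-absorbing second but not strongly $2$-absorbing second $\Bbb Z$-module. The arrow ``weakly second $\Rightarrow$ strongly $2$-absorbing second'' is strict by Example \ref{e2.18}. For ``second $\Rightarrow$ weakly second'' I would take the $\Bbb Z$-module $M=\Bbb Z_{p^\infty}\oplus\Bbb Z_q$ with $p\neq q$ prime: every submodule of $M$ splits as $K_p\oplus K_q$ along the primary decomposition with $K_q\in\{0,\Bbb Z_q\}$, and since $\Bbb Z_{p^\infty}$ is divisible one has $rsM\subseteq K$ with $K\neq M$ only when $K=\Bbb Z_{p^\infty}\oplus 0$ and $q\mid rs$, in which case $q\mid r$ or $q\mid s$ and therefore $rM\subseteq K$ or $sM\subseteq K$; so $M$ is weakly second, whereas $qM=\Bbb Z_{p^\infty}\oplus 0$ is neither $0$ nor $M$, so $M$ is not second. (Alternatively one may cite such an example from \cite{AF101}.)

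The three implications are genuinely routine, so the only real content is the last counterexample, and even there the obstacle is mild: one must exploit primary decomposition to reduce an arbitrary submodule of $M$ to $K_p\oplus K_q$ and the divisibility of $\Bbb Z_{p^\infty}$ to collapse all the nontrivial inclusions $rsM\subseteq K$ to the single case $K=\Bbb Z_{p^\infty}\oplus 0$, which is precisely where the primality of the ideal $q\Bbb Z$ finishes the verification.
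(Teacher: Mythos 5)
Your proposal is correct, and its skeleton matches the paper's: the two non-reversals at the right-hand end of the chain are witnessed by the paper's own Examples \ref{e2.2} and \ref{e2.18}, and the middle implication is exactly the observation that opens the proof of Proposition \ref{p2.18}. Where you genuinely diverge is at the left-hand end: the paper simply cites \cite[3.2]{AF101} both for ``second $\Rightarrow$ weakly second'' and for its irreversibility, whereas you prove the implication directly (the argument $rN\not\subseteq K$ gives $rN\neq 0$, hence $rN=N$, hence $sN=rsN\subseteq K$ is exactly right) and manufacture your own counterexample $\Bbb Z_{p^\infty}\oplus\Bbb Z_q$. That example does work: primary decomposition splits every submodule as $K_p\oplus K_q$ with $K_q\in\{0,\Bbb Z_q\}$, divisibility of $\Bbb Z_{p^\infty}$ forces $K_p=\Bbb Z_{p^\infty}$ whenever $rsM\subseteq K$ with $rs\neq 0$, and primality of $q$ finishes the weakly second verification, while $qM=\Bbb Z_{p^\infty}\oplus 0$ is neither $0$ nor $M$, so $M$ is not second. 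The only blemish is your clause ``$rsM\subseteq K$ with $K\neq M$ only when $K=\Bbb Z_{p^\infty}\oplus 0$ and $q\mid rs$,'' which overlooks the degenerate case $rs=0$, where $K$ can be arbitrary; that case is harmless, since then $rM=0\subseteq K$ or $sM=0\subseteq K$. The net effect is that your route is self-contained where the paper defers to an external reference: the citation buys brevity, your version buys verifiability within the paper itself.
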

\begin{proof}
The first assertion follows from \cite[3.2]{AF101}, Proposition \ref{p2.18}, and Example \ref{e2.2}. The second assertion follows from \cite[3.2]{AF101}, Example \ref{e2.18}, and Example \ref{e2.2}.
\end{proof}

\begin{prop}\label{p8.21}
Let $M$ be an $R$-module and $\{K_i\}_{i \in I}$ be a chain of strongly
2-absorbing second submodules of $M$. Then $\cup_{i \in I}K_i$ is a strongly 2-absorbing second submodule of $M$.
\end{prop}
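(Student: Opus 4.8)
The plan is to mimic the argument for Proposition \ref{p7.21} (the analogous statement for 2-absorbing second submodules), but now keeping track of \emph{two} completely irreducible submodules at once, since the defining condition for ``strongly 2-absorbing second'' involves an intersection $L_1 \cap L_2$. First I would set $K = \cup_{i \in I} K_i$; this is a submodule of $M$ because $\{K_i\}_{i\in I}$ is a chain. To verify the definition, take $a, b \in R$ and completely irreducible submodules $L_1, L_2$ of $M$ with $abK \subseteq L_1 \cap L_2$, and assume that $aK \not\subseteq L_1 \cap L_2$ and $bK \not\subseteq L_1 \cap L_2$; the goal is to conclude $ab \in Ann_R(N)$, i.e. $abK = 0$.

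Next I would extract witnesses: since $aK \not\subseteq L_1\cap L_2$ we have $aK \not\subseteq L_1$ or $aK \not\subseteq L_2$, and similarly for $b$. Because $K$ is a union of the chain, $aK \not\subseteq L_j$ means $aK_{n} \not\subseteq L_j$ for some index $n$, and this persists upward: $aK_s \not\subseteq L_j$ whenever $K_n \subseteq K_s$. So I would pick indices large enough (using that $\{K_i\}$ is a chain, so finitely many indices have a common upper bound) that a single $K_h$ simultaneously satisfies $aK_h \not\subseteq L_1\cap L_2$ and $bK_h \not\subseteq L_1\cap L_2$, and the same for every $K_{h'} \supseteq K_h$ in the chain. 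Applying the strongly 2-absorbing second property of $K_h$ to $abK_h \subseteq L_1 \cap L_2$ then forces $abK_h = 0$, and likewise $abK_{h'} = 0$ for all $K_{h'} \supseteq K_h$. Since every element of $K = \cup_{i} K_i$ lies in some such $K_{h'}$ (those below $K_h$ are contained in it), we get $abK = 0$, hence $ab \in Ann_R(N)$ as required — wait, more precisely $ab \in Ann_R(K)$, which is exactly the third alternative in Definition \ref{d2.2} for the submodule $\cup_{i\in I}K_i$.

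The main obstacle — really the only subtle point — is the bookkeeping with the two completely irreducible submodules and the "persistence" of non-containment along the chain: one must argue carefully that the witnessing index can be chosen once and for all, large enough to handle $a$ versus $L_1$, $a$ versus $L_2$, $b$ versus $L_1$, and $b$ versus $L_2$ simultaneously, and that moving further up the chain cannot destroy non-containment. This is where the chain hypothesis is essential: in a chain, any finite set of indices is dominated by one of them. Everything else is a routine transcription of the proof of Proposition \ref{p7.21}, with $L$ replaced by $L_1 \cap L_2$ throughout. I would also remark that, alternatively, one could phrase the whole argument ideal-theoretically via the characterization in Theorem \ref{t1.5}(c), checking that for each $a,b \in R$ one of $abK = aK$, $abK = bK$, $abK = 0$ holds by pushing the corresponding equalities for the $K_i$ through the union; but the completely-irreducible formulation above is closer to the style of the surrounding results.
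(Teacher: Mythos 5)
Your argument is correct and is essentially the paper's own proof: the paper likewise picks witnessing indices $n,m$ with $aK_n\not\subseteq H$ and $bK_m\not\subseteq H$, notes that non-containment persists up the chain, and concludes $abK_h=0$ for every common upper bound $K_h$, hence $ab(\cup_{i\in I}K_i)=0$. The only cosmetic difference is that the paper tests against a single arbitrary submodule $H$ (implicitly via Theorem \ref{t1.5}) rather than against $L_1\cap L_2$ from Definition \ref{d2.2}, which changes nothing of substance.
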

\begin{proof}
Let $a, b \in R$, $H$ be a submodule of $M$, and $ab(\cup_{i \in I}K_i) \subseteq H$. Assume that $a(\cup_{i \in I}K_i )\not \subseteq H$ and $b(\cup_{i \in I}K_i) \not \subseteq H$. Then there are $m,n \in I$, where $aK_n \not \subseteq H$ and $bK_m \not \subseteq H$. Hence, for every $K_n \subseteq K_s$ and $K_m \subseteq K_d$, we have that $aK_s \not \subseteq H$ and $bK_d \not \subseteq H$. Therefore, for each submodule $K_h$ such that $K_n \subseteq K_h$ and $K_m \subseteq K_h$ we have $abK_h=0$. Hence $ab(\cup_{i \in I}K_i)=0$, as needed.
\end{proof}

\begin{defn}\label{d8.22}
We say that a 2-absorbing second submodule $N$ of an $R$-module $M$
is a \emph {maximal strongly 2-absorbing second submodule} of a submodule
$K$ of $M$, if $N \subseteq K$ and there does not exist a strongly 2-absorbing second submodule $H$ of $M$ such that $N \subset H \subset K$.
\end{defn}

\begin{lem}\label{l8.23}
 Let $M$ be an $R$-module. Then every strongly 2-absorbing second submodule of $M$ is contained in a maximal strongly 2-absorbing second submodule of $M$.
\end{lem}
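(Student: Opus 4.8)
The plan is to mimic the proof of Lemma \ref{l7.23} and apply Zorn's Lemma to a suitable poset, using Proposition \ref{p8.21} to control chains. Fix a strongly 2-absorbing second submodule $N$ of $M$, and let $\Sigma$ be the collection of all strongly 2-absorbing second submodules $H$ of $M$ with $N \subseteq H$, partially ordered by inclusion. Then $\Sigma$ is non-empty, since $N \in \Sigma$.

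The only point to check is the chain condition. Let $\{K_i\}_{i \in I}$ be a non-empty chain in $\Sigma$. By Proposition \ref{p8.21}, $\bigcup_{i \in I} K_i$ is again a strongly 2-absorbing second submodule of $M$, and it obviously contains $N$; hence it lies in $\Sigma$ and is an upper bound for the chain. Thus every chain in $\Sigma$ has an upper bound in $\Sigma$, and Zorn's Lemma produces a maximal element $N^{*}$ of $\Sigma$. Finally one verifies that $N^{*}$ is a maximal strongly 2-absorbing second submodule of $M$ in the sense of Definition \ref{d8.22}: indeed $N \subseteq N^{*} \subseteq M$, and if some strongly 2-absorbing second submodule $H$ satisfied $N^{*} \subset H \subset M$, then $H$ would be an element of $\Sigma$ properly containing the maximal element $N^{*}$, a contradiction. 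Hence $N$ is contained in the maximal strongly 2-absorbing second submodule $N^{*}$.

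I do not expect any real obstacle here: the argument is entirely routine once Proposition \ref{p8.21} is in hand, the one mildly delicate step being the identification of a maximal element of $\Sigma$ with the notion in Definition \ref{d8.22}, which is immediate from the definitions. As in Lemma \ref{l7.23}, the whole proof can in fact be compressed to the single remark that the claim follows from Zorn's Lemma together with Proposition \ref{p8.21}.
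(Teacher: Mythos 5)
Your argument is correct and is exactly the route the paper takes: its proof of Lemma \ref{l8.23} is the one-line remark that the claim follows from Zorn's Lemma together with Proposition \ref{p8.21}, and your write-up simply fills in the routine details of that application.
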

\begin{proof}
This is proved easily by using Zorn's Lemma and Proposition \ref{p8.21}.
\end{proof}

\begin{defn}\label{d8.24}
Let $N$ be a submodule of an $R$-module $M$.
We define the \emph{strongly 2-absorbing second radical} of $N$ as the sum of all strongly 2-absorbing second submodules of $M$ contained in $N$ and we denote it by $s.2.sec(N)$. In case $N$ does not contain any strongly 2-absorbing second submodule, the strongly 2-absorbing second radical of $N$ is defined to be $(0)$. We say that $N \not =0$ is a \emph{strongly 2-absorbing second radical
submodule of $M$} if $s.2.sec(N)=N$.
\end{defn}

\begin{prop}\label{p8.27}
Let $N$ and $K$ be two submodules of an $R$-module $M$. Then we have the following.
\begin{itemize}
  \item [(a)] If $N\subseteq K$, then $s.2.sec(N)\subseteq s.2.sec(K)$.
  \item [(b)] $s.2.sec(N)\subseteq N$.
  \item [(c)] $s.2.sec(s.2.sec(N))=s.2.sec(N)$.
  \item [(d)] $s.2.sec(N)+s.2.sec(K)\subseteq s.2.sec(N+K)$.
  \item [(e)] $s.2.sec(N\cap K)=s.2.sec(s.2.sec(N) \cap s.2.sec(K))$.
  \item [(g)] If $N+K=s.2.sec(N)+s.2.sec(K)$, then $s.2.sec(N+K)=N+K$.
\end{itemize}
\end{prop}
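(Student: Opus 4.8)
The plan is to deduce every part from one elementary observation, which I will use repeatedly: if $H$ is a strongly 2-absorbing second submodule of $M$ and $H\subseteq L$ for some submodule $L$ of $M$, then $H$ is one of the submodules whose sum defines $s.2.sec(L)$, so $H\subseteq s.2.sec(L)$; conversely $s.2.sec(L)$, being a sum of submodules each contained in $L$, satisfies $s.2.sec(L)\subseteq L$. The second half already gives (b). Monotonicity is equally immediate: if $N\subseteq K$, every strongly 2-absorbing second submodule of $M$ contained in $N$ is contained in $K$, so the corresponding sums satisfy $s.2.sec(N)\subseteq s.2.sec(K)$, which is (a).

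For (c), applying (b) to the submodule $s.2.sec(N)$ gives $s.2.sec(s.2.sec(N))\subseteq s.2.sec(N)$. For the reverse inclusion, write $s.2.sec(N)=\sum_{i}H_i$ with each $H_i$ a strongly 2-absorbing second submodule of $M$ contained in $N$; then $H_i\subseteq s.2.sec(N)$, so the basic observation (with $L=s.2.sec(N)$) gives $H_i\subseteq s.2.sec(s.2.sec(N))$, and summing over $i$ yields $s.2.sec(N)\subseteq s.2.sec(s.2.sec(N))$. Part (d) follows from (a) alone: since $N\subseteq N+K$ and $K\subseteq N+K$ we get $s.2.sec(N)\subseteq s.2.sec(N+K)$ and $s.2.sec(K)\subseteq s.2.sec(N+K)$, hence their sum lies in $s.2.sec(N+K)$. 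Part (g) is then the chain $N+K=s.2.sec(N)+s.2.sec(K)\subseteq s.2.sec(N+K)\subseteq N+K$, the first containment by (d) together with the hypothesis and the last by (b).

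Part (e) is the only one that needs a genuine two-sided argument. By (b), $s.2.sec(N)\cap s.2.sec(K)\subseteq N\cap K$, so (a) gives $s.2.sec(s.2.sec(N)\cap s.2.sec(K))\subseteq s.2.sec(N\cap K)$. For the opposite inclusion, let $H$ be a strongly 2-absorbing second submodule of $M$ with $H\subseteq N\cap K$; applying the basic observation with $L=N$ and then with $L=K$ gives $H\subseteq s.2.sec(N)$ and $H\subseteq s.2.sec(K)$, so $H\subseteq s.2.sec(N)\cap s.2.sec(K)$, and one more application of the basic observation gives $H\subseteq s.2.sec(s.2.sec(N)\cap s.2.sec(K))$. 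Summing over all such $H$ yields $s.2.sec(N\cap K)\subseteq s.2.sec(s.2.sec(N)\cap s.2.sec(K))$, and equality follows.

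I expect no real obstacle: the whole proposition is bookkeeping with sums of submodules. The only points deserving a line of care are keeping the two inclusions of (e) in order, and the degenerate case in which one of the submodules involved contains no strongly 2-absorbing second submodule — but then the relevant radical is $(0)$ by the convention in Definition \ref{d8.24}, and since a strongly 2-absorbing second submodule is non-zero by definition we have $s.2.sec(0)=0$, so all the identities above continue to hold trivially.
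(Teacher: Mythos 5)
Your proof is correct, and it is exactly the routine argument the paper has in mind: the authors' proof of Proposition \ref{p8.27} consists solely of the remark that the claims are straightforward, so your write-up simply supplies the omitted bookkeeping (the basic observation about sums of strongly 2-absorbing second submodules, plus the degenerate case $s.2.sec(\cdot)=0$) in full and correct detail.
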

\begin{proof}
These are straightforward.
\end{proof}

\begin{cor}\label{l8.25}
Let $N$ be a submodule of an $R$-module $M$. If $s.2.sec(N)\not =0$, then
$s.2.sec(N)$ is a strongly 2-absorbing second radical submodule of $M$.
\end{cor}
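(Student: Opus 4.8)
The plan is to deduce the statement immediately from part (c) of Proposition \ref{p8.27}. By Definition \ref{d8.24}, calling a submodule $H$ of $M$ a strongly 2-absorbing second radical submodule means two things: that $H \neq 0$, and that $s.2.sec(H) = H$. So I would set $H = s.2.sec(N)$ and check these two conditions in turn. The first is exactly the hypothesis $s.2.sec(N) \neq 0$, so nothing is needed there. For the second, I would apply Proposition \ref{p8.27}(c) with the given $N$, which states precisely that $s.2.sec(s.2.sec(N)) = s.2.sec(N)$; this is the idempotence of the operator $s.2.sec$. Together these two facts say exactly that $s.2.sec(N)$ is a strongly 2-absorbing second radical submodule of $M$, which completes the proof.

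If one prefers not to quote Proposition \ref{p8.27}(c) and instead argue in a self-contained way, the key identity unwinds from the definition as follows. The inclusion $s.2.sec(s.2.sec(N)) \subseteq s.2.sec(N)$ is the instance of Proposition \ref{p8.27}(b) applied to the submodule $s.2.sec(N)$. For the reverse inclusion, note that every strongly 2-absorbing second submodule $S$ of $M$ with $S \subseteq N$ is, by the very definition of $s.2.sec(N)$ as a sum of such submodules, also contained in $s.2.sec(N)$; hence $S$ is one of the submodules summed in forming $s.2.sec(s.2.sec(N))$, so $S \subseteq s.2.sec(s.2.sec(N))$. Summing over all such $S$ gives $s.2.sec(N) \subseteq s.2.sec(s.2.sec(N))$, and equality follows.

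I do not expect any genuine obstacle: the content is entirely formal, a consequence of the fact that $s.2.sec$ is an idempotent, order-preserving, deflationary operator on submodules. The only point requiring a little care is the bookkeeping around the nonvanishing convention in Definition \ref{d8.24} — this is exactly why the corollary must assume $s.2.sec(N) \neq 0$, since otherwise the candidate submodule $s.2.sec(N)$ would fail the standing requirement of being nonzero even though the identity $s.2.sec(s.2.sec(N)) = s.2.sec(N)$ still holds trivially.
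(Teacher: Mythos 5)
Your proof is correct and matches the paper's: the paper also derives this corollary directly from Proposition \ref{p8.27}(c), and your unpacking of the definition (nonvanishing plus idempotence of $s.2.sec$) together with the self-contained verification of the identity $s.2.sec(s.2.sec(N))=s.2.sec(N)$ is sound.
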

\begin{proof}
This follows from Proposition \ref{p8.27} (c).
\end{proof}

\begin{thm}\label{t8.26} Let $M$ be an $R$-module. If $M$ satisfies
the descending chain condition on strongly 2-absorbing second radical submodules, then
every non-zero submodule of $M$ has only a finite number of maximal strongly
2-absorbing second submodules.
\end{thm}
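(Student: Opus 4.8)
The plan is to rerun the argument of Theorem~\ref{c3.10}, with the Artinian hypothesis (which there produced a minimal counterexample among \emph{all} submodules) replaced by the present descending chain condition, which produces a minimal counterexample among strongly $2$-absorbing second radical submodules. The reduction to that class works because every strongly $2$-absorbing second submodule $V$ satisfies $s.2.sec(V)=V$, so it lies in the class on which the chain condition is imposed.

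First I would set up the counterexample and push it into the radical class. Suppose, for a contradiction, that some non-zero submodule $N$ of $M$ has infinitely many maximal strongly $2$-absorbing second submodules. Each of these is a strongly $2$-absorbing second submodule contained in $N$, hence is contained in $s.2.sec(N)$; and if $V\subseteq s.2.sec(N)\subseteq N$ is a maximal strongly $2$-absorbing second submodule of $N$, then it is also one of $s.2.sec(N)$, since a strongly $2$-absorbing second $H$ with $V\subset H\subseteq s.2.sec(N)$ would already violate maximality of $V$ in $N$. As there is at least one such $V$, we get $s.2.sec(N)\neq 0$, so by Corollary~\ref{l8.25} $s.2.sec(N)$ is a strongly $2$-absorbing second radical submodule, and it still has infinitely many maximal strongly $2$-absorbing second submodules. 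Hence the collection $\Sigma$ of strongly $2$-absorbing second radical submodules of $M$ possessing infinitely many maximal strongly $2$-absorbing second submodules is non-empty, and by the hypothesis on $M$ it has a minimal element $S$.

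Next I would extract the structural dichotomy exactly as in Theorem~\ref{c3.10}. The module $S$ is not strongly $2$-absorbing second (otherwise $S$ would be the only maximal strongly $2$-absorbing second submodule of $S$), so by Definition~\ref{d2.2} there are $a,b\in R$ and completely irreducible submodules $L_1,L_2$ of $M$ with $abS\subseteq L_1\cap L_2$ while $aS\not\subseteq L_1\cap L_2$, $bS\not\subseteq L_1\cap L_2$ and $abS\neq 0$. Put $T_1=(L_1\cap L_2:_S a)$, $T_2=(L_1\cap L_2:_S b)$ and $T_3=(0:_S ab)$; the three non-containments say precisely that each $T_i$ is a proper submodule of $S$. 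If $V$ is any maximal strongly $2$-absorbing second submodule of $S$, then $abV\subseteq abS\subseteq L_1\cap L_2$, and applying Definition~\ref{d2.2} to $V$ yields $aV\subseteq L_1\cap L_2$ or $bV\subseteq L_1\cap L_2$ or $abV=0$, i.e. $V\subseteq T_1$, $V\subseteq T_2$, or $V\subseteq T_3$.

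Finally I would close the loop with minimality. Being strongly $2$-absorbing second and contained in $T_i$, such a $V$ lies in $s.2.sec(T_i)$; in particular, whenever some maximal strongly $2$-absorbing second submodule of $S$ falls into $T_i$ we have $s.2.sec(T_i)\neq 0$, so $s.2.sec(T_i)$ is a strongly $2$-absorbing second radical submodule \emph{properly} contained in $S$, and $V$ is, by the same inheritance used in the reduction step, a maximal strongly $2$-absorbing second submodule of $s.2.sec(T_i)$. By minimality of $S$ in $\Sigma$, each of $s.2.sec(T_1),s.2.sec(T_2),s.2.sec(T_3)$ has only finitely many maximal strongly $2$-absorbing second submodules; hence so does $S$, contradicting $S\in\Sigma$. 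I expect the only delicate points to be the repeated bookkeeping that maximality of a strongly $2$-absorbing second submodule inside $S$ is inherited by the proper submodules of $S$ in which it sits, together with the minor case distinction when $s.2.sec(T_i)=0$; everything else is a transcription of the proof of Theorem~\ref{c3.10}.
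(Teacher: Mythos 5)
Your proposal is correct and follows essentially the same route as the paper: pass from $N$ to $s.2.sec(N)$, take a minimal strongly 2-absorbing second radical submodule $S$ with infinitely many maximal strongly 2-absorbing second submodules, use the failure of the strongly 2-absorbing second property to trap every such maximal $V$ in one of three proper submodules $(L:_Sa)$, $(L:_Sb)$, $(0:_Sab)$, and then in their $s.2.sec$'s, and conclude by minimality. Your write-up is in fact more careful than the paper's on the inheritance of maximality and on the degenerate case $s.2.sec(T_i)=0$, but the argument is the same.
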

\begin{proof}
Suppose that there exists a non-zero submodule $N$ of $M$
such that it has an infinite number of maximal strongly 2-absorbing second
submodules. Then $s.2.sec(N)$ is a strongly
2-absorbing second radical submodule of $M$ and $s.2.sec(N)$ has an infinite
number of maximal strongly 2-absorbing second submodules. Let $S$ be a strongly 2-absorbing second radical submodule of $M$ chosen minimal such that $S$ has an infinite
number of maximal strongly 2-absorbing second submodules. Then $S$ is not strongly 2-absorbing second. Thus there exist $r, t \in R$ and a submodule $L$ of $M$
such that $rtS\subseteq L$ but $rS \not \subseteq L$, $tS \not
\subseteq L$, and $rtS\not=0$. Let $V$ be a maximal strongly 2-absorbing second submodule of $M$ contained in $S$. Then $V \subseteq (L:_Sr)$ or $V \subseteq
(L:_St)$ or $V \subseteq (0:_Srt)$ so that $V \subseteq s.2.sec((K:_Sr))$ or $V \subseteq
s.2.sec((K:_St))$ or $V \subseteq s.2.sec((0:_Srt))$. By the choice of $S$, the modules $s.2.sec((K:_Sr))$, $s.2.sec((K:_St))$, and $s.2.sec((0:_Srt))$ have only finitely many maximal strongly 2-absorbing second submodules. Therefore, there is only a finite number of possibilities for the module $S$, which is a contradiction.
\end{proof}

\begin{cor}\label{c8.10}
Every Artinian $R$-module has only a finite number of maximal strongly 2-absorbing second submodules.
\end{cor}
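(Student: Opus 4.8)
The plan is to obtain this as an immediate consequence of Theorem~\ref{t8.26}. The only thing that needs to be verified is that an Artinian module satisfies the hypothesis of that theorem, namely the descending chain condition on strongly $2$-absorbing second radical submodules. First I would recall that, by definition, an Artinian $R$-module $M$ satisfies the descending chain condition on \emph{all} of its submodules. Any descending chain of strongly $2$-absorbing second radical submodules of $M$ is in particular a descending chain of submodules of $M$, hence stabilizes; so $M$ satisfies the descending chain condition on the distinguished subclass as well. This is the key (and essentially trivial) observation: passing to a subclass of submodules can only make the chain condition easier to satisfy.

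Next I would simply invoke Theorem~\ref{t8.26} with the ambient module $M$: it tells us that every non-zero submodule of $M$ has only finitely many maximal strongly $2$-absorbing second submodules. Taking the submodule to be $M$ itself (legitimate as soon as $M\neq 0$) gives exactly the assertion that $M$ has only a finite number of maximal strongly $2$-absorbing second submodules.

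Finally, I would dispose of the degenerate case $M=0$ separately: such an $M$ has no non-zero submodules, hence no strongly $2$-absorbing second submodules and a fortiori no maximal ones, so the claimed finiteness holds vacuously. I do not expect any genuine obstacle here; all of the substantive work has already been carried out in Theorem~\ref{t8.26}, and this corollary is merely the observation that the Artinian hypothesis supplies the required chain condition.
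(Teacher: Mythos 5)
Your proof is correct and is exactly the argument the paper intends: the corollary is stated as an immediate consequence of Theorem~\ref{t8.26} (the paper gives no separate proof), and your observation that the Artinian hypothesis trivially supplies the descending chain condition on the subclass of strongly 2-absorbing second radical submodules is the only step needed.
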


\begin{thm}\label{l8.11} Let $M$ be an $R$-module. If $E$ is an injective $R$-module and $N$ is a 2-absorbing submodule of $M$ such that $Hom_R(M/N,E) \not =0$, then $Hom_R(M/N,E)$ is a strongly 2-absorbing second $R$-module.
\end{thm}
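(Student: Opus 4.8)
The plan is to pass to the quotient $\overline{M} := M/N$ and verify the criterion of Theorem \ref{t1.5}(c). Put $T := Hom_R(\overline{M},E)$, which is non-zero by hypothesis. First I would record that, unwinding the definition of a $2$-absorbing submodule, the hypothesis on $N$ is equivalent to the following statement about $\overline{M}$: for all $a,b\in R$ and $\overline{m}\in\overline{M}$, if $ab\overline{m}=0$ then $a\overline{m}=0$ or $b\overline{m}=0$ or $ab\overline{M}=0$ (here $Ann_R(\overline{M})=(N:_RM)$). By Theorem \ref{t1.5} it then suffices to show that for every $a,b\in R$ one of $abT=aT$, $abT=bT$, $abT=0$ holds.

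So I fix $a,b\in R$ and argue by cases. If $ab\overline{M}=0$, then $(abf)(\overline{m})=f(ab\overline{m})=0$ for every $f\in T$ and $\overline{m}\in\overline{M}$, hence $abT=0$ and we are done. Otherwise $ab\overline{M}\neq 0$, and I claim $(0:_{\overline{M}}ab)=(0:_{\overline{M}}a)\cup(0:_{\overline{M}}b)$: the inclusion $\supseteq$ is trivial, while $\subseteq$ is exactly the displayed $2$-absorbing condition, since the alternative $ab\overline{M}=0$ is now excluded. Because a module is never the union of two proper submodules, $(0:_{\overline{M}}ab)$ coincides with $(0:_{\overline{M}}a)$ or with $(0:_{\overline{M}}b)$; by the symmetry of $a$ and $b$ I may assume $(0:_{\overline{M}}ab)=(0:_{\overline{M}}a)$.

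The heart of the argument is to deduce $abT=aT$ from this last equality. The inclusion $abT\subseteq aT$ is clear, since $abf=a(bf)$. For the reverse, given $f\in T$ I would define $\phi:ab\overline{M}\to E$ by $\phi(ab\overline{m})=f(a\overline{m})$; this is well defined because $ab\overline{m}=ab\overline{n}$ forces $\overline{m}-\overline{n}\in(0:_{\overline{M}}ab)=(0:_{\overline{M}}a)$, hence $a\overline{m}=a\overline{n}$ and $f(a\overline{m})=f(a\overline{n})$, and it is visibly $R$-linear. Since $E$ is injective, $\phi$ extends along the inclusion $ab\overline{M}\hookrightarrow\overline{M}$ to some $h\in Hom_R(\overline{M},E)=T$, and then $(abh)(\overline{m})=h(ab\overline{m})=\phi(ab\overline{m})=f(a\overline{m})=(af)(\overline{m})$ for all $\overline{m}$, so $af=abh\in abT$. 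As $f$ was arbitrary this gives $aT\subseteq abT$, hence $abT=aT$; in the remaining case one obtains $abT=bT$ the same way, with the roles of $a$ and $b$ interchanged. This verifies the hypothesis of Theorem \ref{t1.5}, so $T=Hom_R(M/N,E)$ is a strongly $2$-absorbing second $R$-module.

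I expect the only real subtlety to be recognising that the $2$-absorbing hypothesis on $N$ is precisely the equality $(0:_{\overline{M}}ab)=(0:_{\overline{M}}a)\cup(0:_{\overline{M}}b)$ in the non-degenerate case, which is exactly what makes $\phi$ well defined; once that is in hand, the injectivity of $E$ and the elementary fact about unions of submodules finish the proof. The bookkeeping to watch is the passage between statements about $N\subseteq M$ and statements about $\overline{M}$, together with the observation that $Ann_R(T)$ itself never has to be computed — only $Ann_R(\overline{M})=(N:_RM)$ enters, and only through the degenerate case $ab\overline{M}=0$.
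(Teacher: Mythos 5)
Your proof is correct and follows essentially the same route as the paper: both arguments reduce to the criterion of Theorem \ref{t1.5}(c) by way of the trichotomy $(N:_Mab)\in\{(N:_Ma),\,(N:_Mb),\,M\}$ for a $2$-absorbing submodule $N$. The only difference is that where the paper invokes \cite[3.13(a)]{AF101} for the identity $r\,Hom_R(M/N,E)=Hom_R(M/(N:_Mr),E)$, you prove the required instance directly by extending the well-defined map $ab\overline{m}\mapsto f(a\overline{m})$ along the inclusion $ab\overline{M}\hookrightarrow\overline{M}$ using the injectivity of $E$, which is a correct and self-contained substitute (and you also justify the trichotomy, via the union-of-two-submodules argument, which the paper merely asserts).
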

\begin{proof}
Let $r, s \in R$. Since $N$ is a 2-absorbing submodule
of $M$, we can assume that $(N:_Mrs)=(N:_Mr)$  or $(N:_Mrs)=M$.
Since $E$ is an injective $R$-module, by replacing $M$ with $M/N$ in \cite[3.13 (a)]{AF101}, we have $Hom_R(M/(N:_Mr), E)=rHom_R(M/N,E)$. Therefore,
$$
rsHom_R(M/N, E)=Hom_R(M/(N:_Mrs), E)=
$$
$$
Hom_R(M/(N:_Mr), E)=rHom_R(M/N,E)
$$
or
$$
rsHom_R(M/N, E)=Hom_R(M/(N:_Mrs), E)=
$$
$$
Hom_R(M/M, E)=0,
$$
as needed
\end{proof}

\begin{thm}\label{t8.13}
Let $M$ be a strongly 2-absorbing second $R$-module and $F$ be a right exact linear covariant functor over the category of $R$-modules. Then $F(M)$ is a strongly 2-absorbing second $R$-module if $F(M) \not =0$.
\end{thm}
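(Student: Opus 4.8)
The plan is to reduce everything to the internal characterization of strongly $2$-absorbing second modules in Theorem \ref{t1.5}: a non-zero $R$-module $N$ is strongly $2$-absorbing second if and only if for all $a,b\in R$ one has $abN=aN$, or $abN=bN$, or $abN=0$. Since $F(M)\neq 0$ is assumed, it suffices to verify this trichotomy for $F(M)$, starting from the fact that $M$ itself satisfies it.

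The key step is an observation about how $F$ sees the submodules $cM$. For $c\in R$ let $\iota_c\colon cM\hookrightarrow M$ be the inclusion and factor multiplication by $c$ on $M$ as $\mu_c^M=\iota_c\circ\pi_c$, where $\pi_c\colon M\twoheadrightarrow cM$ is the surjective corestriction of $\mu_c^M$. Because $F$ is linear, $F$ carries $\mu_c^M=c\cdot\mathrm{id}_M$ to $c\cdot\mathrm{id}_{F(M)}=\mu_c^{F(M)}$; and because $F$ is right exact it preserves epimorphisms, so $F(\pi_c)$ is onto. Hence
\[
cF(M)=\mathrm{Im}\,\mu_c^{F(M)}=\mathrm{Im}\bigl(F(\iota_c)\circ F(\pi_c)\bigr)=\mathrm{Im}\,F(\iota_c).
\]
Thus $cF(M)$ depends only on the submodule $cM\subseteq M$ together with its inclusion map into $M$, not on $c$ as such.

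Now fix $a,b\in R$. Since $M$ is strongly $2$-absorbing second, Theorem \ref{t1.5} gives $abM=aM$, or $abM=bM$, or $abM=0$. In the first case $abM$ and $aM$ are literally the same submodule with the same inclusion into $M$, so the displayed formula yields $abF(M)=\mathrm{Im}\,F(\iota_{ab})=\mathrm{Im}\,F(\iota_a)=aF(M)$; the case $abM=bM$ is identical with $b$ in place of $a$. In the remaining case $abM=0$ the morphism $\mu_{ab}^M$ is the zero morphism, hence so is $F(\mu_{ab}^M)=\mu_{ab}^{F(M)}$ (here we only use additivity of $F$), and therefore $abF(M)=0$. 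In all cases the trichotomy holds for $F(M)$, and Theorem \ref{t1.5} then gives that $F(M)$ is strongly $2$-absorbing second.

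The only delicate point is the interaction between $F$ and submodules: a right exact functor need not preserve monomorphisms, so one cannot simply regard $F(cM)$ as a submodule of $F(M)$. The resolution, and the heart of the argument, is that we never need $F(cM)$ itself, only the image of $F(\iota_c)$, and this image is forced to equal $cF(M)$ by routing multiplication by $c$ through the epimorphism $M\twoheadrightarrow cM$, which $F$ does preserve. The other thing worth checking carefully is the bookkeeping that $F$ sends multiplication by $c$ on $M$ to multiplication by $c$ on $F(M)$ (linearity of $F$ on $\mathrm{Hom}$-sets, applied to $c\cdot\mathrm{id}_M$) and sends the zero morphism to the zero morphism (additivity).
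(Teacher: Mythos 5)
Your proof is correct and follows the same route as the paper: the paper simply invokes the external lemma \cite[3.14]{AF101} together with the characterization in Theorem \ref{t1.5} (the reference to ``$(c)\Leftrightarrow(d)$'' there is a typo for $(a)\Leftrightarrow(c)$), and your argument supplies exactly the content of that cited lemma, namely that $cF(M)=\mathrm{Im}\,F(\iota_c)$ depends only on the submodule $cM$ via the factorization of $\mu_c^M$ through the epimorphism $M\twoheadrightarrow cM$. Your write-up is in fact more self-contained than the paper's one-line proof, and the point you flag about $F$ not preserving monomorphisms is handled correctly.
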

\begin{proof}
 This follows from \cite[3.14]{AF101} and Theorem \ref{t1.5} $(c) \Leftrightarrow (d)$.
\end{proof}

\bibliographystyle{amsplain}

\end{document}